\newtheorem{theorem}{Theorem}[section]
\newtheorem{corollary}[theorem]{Corollary}
\newtheorem{lemma}[theorem]{Lemma}
\newtheorem{proposition}[theorem]{Proposition}
\theoremstyle{definition}
\newtheorem{definition}[theorem]{Definition}
\theoremstyle{remark}
\newtheorem{remark}[theorem]{Remark}
\numberwithin{equation}{section}
\newcommand{\cone}{\mbox{$\times \hspace*{-0.244cm} \times$}}
\begin{document}

\title[The Periodic Plateau problem and its application]
{The Periodic Plateau problem and its application}
\author[ J. CHOE]{JAIGYOUNG CHOE}
\date{(arXiv) April 19, 2021.\,\,\,\,\,(Revised) April 26, 2021}
\thanks{Supported in part by NRF-2018R1A2B6004262}

\address{Korea Institute for Advanced Study, Seoul, 02455, Korea}
\email{choe@kias.re.kr}

\begin{abstract} Given a noncompact disconnected complete periodic curve $\Gamma$ with no self intersection in $\mathbb R^3$, it is proved that there exists a noncompact simply connected periodic minimal surface spanning $\Gamma$. As an application it is shown that for any tetrahedron $T$ with dihedral angles $\leq90^\circ$ there exist four embedded minimal annuli in $T$ which are perpendicular to $\partial T$ along their boundary.  It is also proved that every Platonic solid of $\mathbb R^3$ contains five types of free boundary embedded minimal surfaces of genus zero.\\

\noindent{\it Keywords}: Plateau problem, periodic, minimal surface, free boundary, Platonic solid\\
{\it MSC}\,: 53A10, 49Q05
\end{abstract}

\maketitle

\section{introduction}
The famous problem of finding a surface of least area spanning a given Jordan curve, called the Plateau problem, was settled by Douglas and Rad\'{o} independently in 1931. Since then many questions have been raised about the Douglas-Rad\'{o} solution: the uniqueness, the embeddedness, the topology of the solution and the number of solutions.

In this paper we are concerned with the Plateau problem for a noncompact disconnected complete curve $\Gamma\subset\mathbb R^3$ which is periodic. $\Gamma$ is said to be periodic if $\Gamma$ has a fundamental piece $\bar{\gamma}$ in a convex polyhedron $U$ and $\Gamma$ is the infinite union of the congruent copies of $\bar{\gamma}$ obtained in a periodic way. In particular, $\Gamma$ is {\it helically periodic} if it is the union of images of $\bar{\gamma}$ under the cyclic group $\langle\sigma\rangle$ generated by a screw motion  $\sigma$. $\Gamma$ is  {\it translationally periodic} if it is invariant under the cyclic group $\langle\tau\rangle$ generated by a  translation $\tau$. $\Gamma$ is {\it rotationally periodic} if the congruent copies of $\bar{\gamma}$ are obtained by repeatedly extending $\bar{\gamma}$ through $180^\circ$-rotations about the lines connecting each pair of endpoints of $\bar{\gamma}$. $\Gamma$ is {\it reflectively periodic} if the congruent copies of $\bar{\gamma}$ are obtained by infinitely extending $\bar{\gamma}$ by the reflections across the planar faces of $\partial U$ (see Figure 1). The extensions by screw motions, translations, rotations and reflections are to be performed infinitely until $\Gamma$ becomes complete.

We prove that for every complete noncompact disconnected periodic curve $\Gamma$ in $\mathbb R^3$ there exists a noncompact simply connected minimal surface $\Sigma\subset\mathbb R^3$ spanning $\Gamma$ such that $\Sigma$ inherits the periodicity of $\Gamma$ (Theorem \ref{main}). Furthermore, in case $\Gamma$ consists of the $x_3$-axis and a complete connected translationally periodic curve $\gamma_1$ winding around the $x_3$-axis such that a fundamental piece of $\gamma_1$ admits a one-to-one orthogonal projection onto a convex closed curve in the $x_1x_2$-plane, we can show that $\Sigma$ is unique and embedded (Theorem \ref{plateau}).
\begin{center}
\includegraphics[width=5.85in]{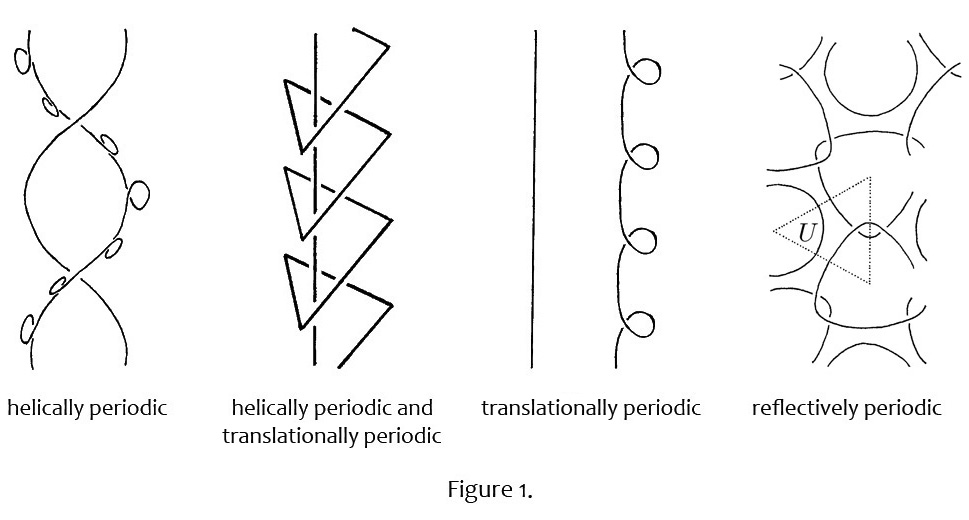}
\end{center}

These two theorems have an interesting application. Smyth \cite{Sm} showed that given a tetrahedron $T$, there exist three embedded minimal disks in $T$ which meet $\partial T$ orthogonally along their boundary. From $T$ Smyth considered a quadrilateral $\Gamma$ whose edges are perpendicular to the faces of $T$. $\Gamma$ bounds a unique minimal graph $\Sigma$. He then showed that the conjugate minimal surface of $\Sigma$ is the desired minimal surface in $T$.

In this paper we will first see that the tetrahedron $T$ gives rise to a noncompact, disconnected, translationally periodic, piecewise linear curve $\Gamma$ such that the edges (=line segments) of a fundamental piece $\bar{\gamma}$ of $\Gamma$ are perpendicular to the faces of $T$. In fact, $\bar{\gamma}$ has two components $\bar{\gamma}_0$, $\bar{\gamma}_1$, where $\bar{\gamma}_0$ has only one edge and $\bar{\gamma}_1$ has 3 edges. So one of the two components of $\Gamma$ is a straight line $\ell$. By Theorem \ref{main} $\Gamma$ bounds a noncompact simply connected translationally periodic minimal surface $\Sigma$. Let $\Sigma^*$ be its conjugate minimal surface. In Theorem \ref{fb} we will prove that if $\ell$ is properly chosen relative to $\bar{\gamma}_1$ then $\Sigma^*$ is a minimal annulus in $T$ which is perpendicular to $\partial T$ (see Figure 2). One boundary component of $\Sigma^*$ is a convex closed curve lying in one face of $T$
and the other component traces along the remaining three faces. Since there are four lines perpendicular to a face of $T$ we conclude that there exist
\begin{center}
\includegraphics[width=4.8in]{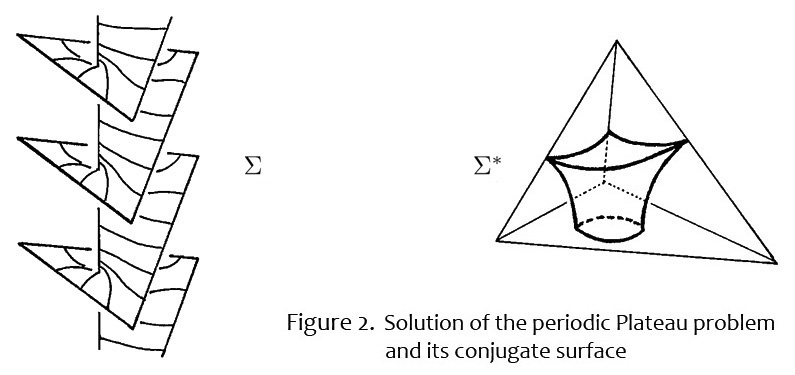}
\end{center}
four free boundary minimal annuli in $T$ if the dihedral angles of $T$ are $\leq90^\circ$. If at least one dihedral angle of $T$ is $>90^\circ$, there exist four minimal annuli which are not necessarily inside $T$ but still perpendicular to the planes containing the faces of $T$ along their boundary.

In general, one cannot generalize Theorem \ref{fb} to construct a free boundary minimal annulus in a polyhedron other than a tetrahedron. However, in case $P_y$ is a right pyramid with a regular polygonal base $B$ and apex $p$ (i.e., $P_y=p\cone B$, the cone), we can show the existence of a free boundary minimal annulus $\Sigma^*$ in $P_y$ such that one boundary component of $\Sigma^*$ is in $B$ and the other component in $p\cone \partial B$ winding around $p$ (Theorem \ref{ps}). Consequently, it is proved that every Platonic solid $P_s$ bounded by regular $n$-gons contains five types of free boundary embedded minimal surfaces $\Sigma_1,\ldots,\Sigma_5$ of genus 0. Three of them, $\Sigma_1,\Sigma_2,\Sigma_3$, intersect each face of $P_s$ along $1,n,2n$ convex closed congruent curves, respectively. $\Sigma_4$ intersects every edge of $P_s$, while $\Sigma_5$ surrounds every vertex of $P_s$ (Corollary \ref{pl}; see Figure 3). As a matter of fact, if $P_s$ is the cube, $\Sigma_1$ is the well-known Schwarz $P$-surface, $\Sigma_4$ is Neovius' surface and $\Sigma_5$ is Schoen's I-WP surface. Finally, if $P_r$ is a right pyramid whose base is a rhombus,  a free boundary minimal annulus in $P_r$ can be similarly constructed (Corollary \ref{pr}).

\begin{center}
\includegraphics[width=5.5in]{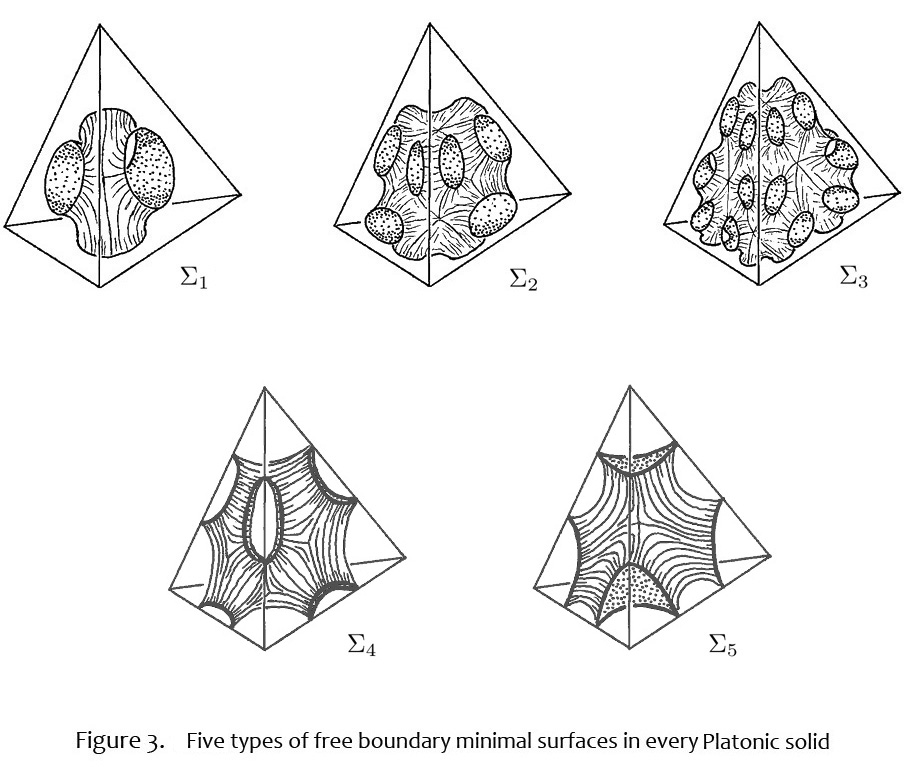}
\end{center}

\section{Periodic Plateau problem}
A Jordan curve is simple and closed. So it has no self intersection and is homeomorphic to a circle. If a simple curve $\Gamma\subset\mathbb R^3$ is not closed but homeomorphic to $\mathbb R^1$ and has infinite length, one cannot in general find a minimal surface spanning $\Gamma$. However, if there exists a surface of finite area spanning $\Gamma$, one can easily show the existence of a minimal surface spanning $\Gamma$. The same is true if $\Gamma$ is the union of simple open curves of infinite lengths bounding a surface of finite area. In case $\Gamma$ cannot bound a surface of finite area, one needs to impose extra conditions on $\Gamma$ to get a minimal surface spanning $\Gamma$. In this section we will see that the periodicity of $\Gamma$ is a sufficient condition for this purpose.
\begin{definition}
Let $\Gamma\subset\mathbb R^3$ be the union of complete open rectifiable curves $\gamma_1,\gamma_2,\gamma_3,\ldots$ and let $U$ be a convex polyhedral domain in $\mathbb R^3$. $\Gamma$ is said to be {\it periodic} if $\Gamma$ is the infinite union of the congruent copies of $\bar{\gamma}:=\Gamma\cap U$. $\bar{\gamma}$ is called a {\it fundamental piece} of $\Gamma$.

a) Suppose $\Gamma$ is homeomorphic to two parallel lines. $\Gamma$ is {\it translationally periodic} if it is the union of translated fundamental pieces $\tau^n(\bar{\gamma})$ for the cyclic group $\langle\tau\rangle$ generated by a parallel translation $\tau$.  $\Gamma$ is invariant under $\langle\tau\rangle$. Moreover, $\Gamma$ is {\it helically periodic} if it is the union of $\sigma^n(\bar{\gamma})$ for the cyclic group $\langle\sigma\rangle$ generated by a screw motion $\sigma$. Assume that the screw motion $\sigma$ is the rotation about the $x_3$-axis by angle $\beta$ composed with the translation  by $e$, that is,
\begin{equation}\label{heli}
\sigma(r\cos\theta,r\sin\theta,x_3)=(r\cos(\theta+\beta),r\sin(\theta+\beta),x_3+e).
\end{equation}
Every translationally periodic $\Gamma$ can be said to be helically periodic as well with respect to $\sigma$ for $\beta=0$.

b) Suppose the fundamental piece $\bar{\gamma}$ has at least two components. $\Gamma$ is said to be {\it rotationally periodic} (or {\it oddly periodic}) if the congruent copies of $\bar{\gamma}$ in $\Gamma$ are obtained by indefinitely extending $\bar{\gamma}$ through $180^\circ$-rotations about the lines connecting each pair of endpoints of $\bar{\gamma}$. On the other hand, $\Gamma$ is {\it reflectively periodic} (or {\it evenly periodic}) if the congruent copies of $\bar{\gamma}$ are obtained by indefinitely extending $\bar{\gamma}$ by the reflection across the planar faces of $\partial U$.

$\Gamma$ is complete because translations, screw motions, rotations and reflections are performed infinitely.
\end{definition}

\begin{theorem}\label{main}
Let $\Gamma\subset\mathbb R^3$ be the union of complete pairwise disjoint simple curves $\gamma_1,\gamma_2,\gamma_3,\ldots$ of infinite lengths. Suppose $\Gamma$ is periodic and its fundamental piece is a finite union of simple curves. Then there exists a periodic simply connected minimal surface $\Sigma$ spanning $\Gamma$. $\Sigma$ inherits the periodicity of $\Gamma$ and its fundamental region has least area among the fundamental regions of all the periodic simply connected surfaces spanning $\Gamma$.
\end{theorem}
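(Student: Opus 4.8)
The plan is to obtain $\Sigma$ by a periodic version of the parametric (Douglas) method: minimize the area of one fundamental region in the class of periodic simply connected surfaces spanning $\Gamma$. Since $\bar\gamma=\Gamma\cap U$ is a finite union of compact simple arcs and the periodicity group $G$ --- generated by the screw motion $\sigma$, the translation $\tau$, the $180^\circ$ rotations about the lines joining the endpoints of $\bar\gamma$, or the reflections across the faces of $\partial U$ --- is finitely generated, I would first fix a noncompact simply connected model surface $M$ carrying a properly discontinuous $G$-action compatible with the action of $G$ on $\Gamma$, together with a fundamental region $M_0\subset M$ which is a compact topological disk, i.e. a polygon whose sides are arcs prescribed to map onto the arcs of $\bar\gamma$ and finitely many ``wall'' arcs (identified in pairs, or, in the reflective case, constrained to lie in the faces of $\partial U$). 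The conformal structure of $M_0$ then ranges over a finite dimensional moduli space $\mathcal M$. One seeks $f\colon M\to\mathbb R^3$ in $H^1_{\mathrm{loc}}$, $G$-equivariant, with $f|_{\partial M}$ a weakly monotone parametrization of $\Gamma$, minimizing the Dirichlet energy per period $E(f)=\tfrac12\int_{M_0}|df|^2$ over admissible $f$ and over $\mathcal M$ (and, if several combinatorial types of fundamental region are compatible with $(\Gamma,G)$, over those finitely many types as well). For the reflective and rotational cases this reduces to a free boundary, respectively a fixed boundary, compact Plateau problem in the single cell $U$, which one then extends by the Schwarz reflection principle across the faces, respectively across the lines; the helical and translational cases genuinely require the periodic minimization, so the argument is carried out uniformly.

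The direct method then proceeds as usual. The admissible class is nonempty with finite energy: spanning the arc system $\bar\gamma$ inside the convex cell $\overline U$ by an explicit Lipschitz disk and propagating it by $G$ furnishes an equivariant competitor with finite energy per period. Given a minimizing sequence $(f_k,m_k)$, I would normalize whatever residual reparametrization freedom of $M$ remains (translation along the period direction), use the uniform energy bound to extract a subsequence converging weakly in $H^1_{\mathrm{loc}}$ to some $f$, and invoke lower semicontinuity of $E$ to get $E(f)\le\inf$. The Courant--Lebesgue lemma, applied on half-disks about boundary points of $M_0$, gives equicontinuity of the boundary traces $f_k|_{\partial M_0}$ and prevents any arc of $\bar\gamma$ from being crushed to a point, so $f|_{\partial M}$ is again a weakly monotone parametrization of $\Gamma$ and $f$ is admissible; hence $f$ is a minimizer, provided $m_k$ does not escape to the boundary of $\mathcal M$.

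Securing that proviso --- the non-degeneration of the conformal moduli of $M_0$ --- is the main obstacle, exactly as in the classical Plateau problem for surfaces of prescribed topological type. Here I would use decisively that $\Gamma$ has no self intersection: by periodicity there is a $\delta>0$ such that any two distinct pieces of $\Gamma$ (arcs of distinct $\gamma_i$, or distinct $G$-copies of a fundamental arc) are at distance $\ge\delta$, and the endpoints of the fundamental arcs are fixed points at positive mutual distance. If $m_k$ approached $\partial\mathcal M$, two sides of the polygon $M_0$ would be pinched together, and the Courant--Lebesgue estimate on the pinching neck would force $f_k(M_0)$ to contain curves of length tending to $0$ joining two such $\delta$-separated pieces of $\Gamma$, a contradiction. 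Thus $m_k\to m\in\mathcal M$, the region $M_0$ stays a disk, and $M$ remains simply connected.

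Finally one reads off regularity and the extremal property. Inner variations of $f$ and of $m$, together with the equality case of $\mathrm{Area}(f(M_0))\le E(f)$, show that the minimizer $f$ is conformal and harmonic, so $\Sigma:=f(M)$ is a periodic branched minimal immersion spanning $\Gamma$; it has no interior branch points because $f$ is area minimizing in its class (Osserman-Gulliver-Alt), and, under the regularity assumed on the $\gamma_i$, no boundary branch points either, so $\Sigma$ is a minimal surface. By $G$-equivariance $\Sigma$ inherits the periodicity of $\Gamma$, and $\Sigma$ is simply connected because $M$ is. Since every periodic simply connected surface spanning $\Gamma$ can be conformally reparametrized over $M$ (in one of the admissible combinatorial types) with some admissible modulus, the area of its fundamental region equals the resulting Dirichlet energy per period, which is $\ge E(f)=\mathrm{Area}(f(M_0))$; hence the fundamental region of $\Sigma$ has least area, which is the assertion of Theorem \ref{main}.
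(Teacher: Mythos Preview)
Your approach is essentially the paper's: minimize the Dirichlet energy of one fundamental region over equivariant parametrizations and over the conformal modulus, use Courant--Lebesgue for equicontinuity of boundary values, and extract a conformal harmonic minimizer. The paper carries this out concretely in the helical/translational case with the strip $I_a=[0,a]\times\mathbb R$ as domain, so the moduli space is just the width $a>0$; for the rotational and reflective cases the paper does not run the periodic minimization at all but reduces directly to a compact Douglas problem (adjoining the connecting line segments $\overline{p_i^1p_{i+1}^2}$ as fixed boundary) and to a compact free-boundary problem in $U$, respectively, and then extends by Schwarz reflection across the lines or the faces.

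There is one real gap in your non-degeneration step. You rule out moduli degeneration by arguing that a pinching neck would produce a short curve joining two $\delta$-separated pieces of $\Gamma$. This handles only one end of the moduli interval (in the paper's notation, $a\to 0$, where the two $\Gamma$-sides of the rectangle $[0,a]\times[0,\beta]$ approach each other). At the other end, $a\to\infty$, it is the two \emph{identified} wall sides $[0,a]\times\{0\}$ and $[0,a]\times\{\beta\}$ that become conformally close, and the short curve produced by Courant--Lebesgue joins a point $p$ to its $\sigma$-translate $\sigma(p)$, not two disjoint pieces of $\Gamma$; your $\delta$-argument does not apply, since adjacent $G$-copies of a fundamental arc of $\gamma_i$ share an endpoint and are at distance $0$. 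The paper closes this with the explicit bound
\[
D(\varphi)\ \ge\ \int_0^a\!\!\int_0^\beta|\varphi_y|^2\,dy\,dx\ \ge\ \frac{1}{\beta}\int_0^a\bigl(\varphi_3(x,\beta)-\varphi_3(x,0)\bigr)^2\,dx\ =\ \frac{ae^2}{\beta},
\]
using that the translation part of $\sigma$ forces $\varphi_3(x,\beta)-\varphi_3(x,0)=e>0$. You need exactly this observation (equivalently, $|p-\sigma(p)|\ge e>0$) to block the second degeneration; once you add it, your outline matches the paper's proof.
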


\begin{proof}
Let's first prove the theorem when $\Gamma$ is helically periodic. We assume that $\Gamma$ is invariant under the $\sigma$ defined by (\ref{heli}). We may further assume that $\sigma$  maps the fundamental piece $\bar{\gamma}$ of $\Gamma$ to its adjoining piece, that is, $\bar{\gamma}$ is connected to $\sigma(\bar{\gamma})$ through their common endpoints. $\Gamma$ uniquely determines the angle $\beta>0$ of (\ref{heli}), which we call the {\it period} of $\Gamma$. $\hat{\Sigma}$ is a fundamental region of $\Sigma$ if and only if
$$\Sigma=\bigcup_{k\in\mathbb Z}\sigma^k(\hat{\Sigma})\,\,\,{\rm and}\,\,\, \hat{\Sigma}\cap\sigma(\hat{\Sigma})=\emptyset.$$
\begin{definition}
To each complete helically periodic curve $\Gamma$ we associate the class $\mathcal{C}_{a,\Gamma}$ of admissible maps $\varphi$ from the infinite strip $I_a:=[0,a]\times\mathbb R$ to $\mathbb R^3$ with the following properties:
\begin{enumerate}
\item $\varphi$ is a piecewise $C^{1}$ immersion in the interior of $I_a$ and is continuous in ${I_a}$;
\item $\varphi(x,y+k\beta)=\sigma^k(\varphi(x,y)),\,\, (x,y)\in I_a,\,\,k:{\rm integer},\,\, \beta:{\rm fixed}\,>0$;
\item $\varphi$ restricted to $\{0,a\}\times(0,\beta]$ is a monotone map onto a fundamental piece $\bar{\gamma}$ of $\Gamma$, i.e., $\bar{\gamma}$ is traversed once by $\varphi(\{0,a\}\times(0,\beta])$ although we allow arcs of $\{0,a\}\times(0,\beta]$ to map onto single points of $\bar{\gamma}$.
\end{enumerate}
To normalize $\mathcal{C}_{a,\Gamma}$ let's assume that $\varphi(0,0)=p$ for a fixed point $p$ of $\bar{\gamma}$.
$\varphi$ is said to be {\it invariant under the screw motion} $\sigma$ with {\it period} $\beta$ if $\varphi$ satisfies  property (2).
\end{definition}

Define the {\it area functional} $A$ on $\mathcal{C}_{a,\Gamma}$ by
$$A(\varphi)=\int\int_{[0,a]\times[0,\beta]}|\varphi_x\wedge\varphi_y|dx\,dy$$
and the {\it Dirichlet integral} $D(\varphi)$ of $\varphi\in\mathcal{C}_{a,\Gamma}$ by
$$D(\varphi)=\int\int_{[0,a]\times[0,\beta]}|\nabla\varphi|^2dx\,dy.$$
Since $$|\varphi_x\wedge\varphi_y|\leq\frac{1}{2}\left(|\varphi_x|^2+|\varphi_y|^2\right)$$
we have
\begin{equation}\label{A<D}
A(\varphi)\leq\frac{1}{2}D(\varphi),\,\,\varphi\in\mathcal{C}_{a,\Gamma}
\end{equation}
where equality holds if and only if $\varphi$ is almost conformal. In order to obtain the equality case, we need to prove the existence of periodic isothermal coordinates invariant under $\sigma$ on the surface $\varphi(I_a)$.
\begin{proposition}\label{prop1}
For any $\varphi\in\mathcal{C}_{a,\Gamma}$ there exists a periodic homeomorphism $H:I_a\rightarrow I_{\bar{b}}:=[0,\bar{b}]\times\mathbb R$ such that $H^{-1}$ has period $\beta$ and the reparametrized map $\varphi\circ H^{-1}:I_{\bar{b}}\rightarrow\varphi(I_a)$ is a conformal map in $\mathcal{C}_{\bar{b},\Gamma}$.
\end{proposition}
\begin{proof}
Let $N$ be the annulus obtained from $[0,a]\times[0,\beta]$ by identifying the two line segments $[0,a]\times\{0,\beta\}$. Let $g$ be the metric on $N$ which is pulled back by $\varphi$ from the metric of $\varphi(I_a)$. $g$ is well-defined since $\varphi$ is invariant under the screw motion $\sigma$ determined by $\Gamma$. Let's consider the Dirichlet problem on $(N,g)$ for constant $b>0$:
$$\Delta u=0,\,\,u=0\,\,{\rm on}\,\,\{0\}\times[0,\beta],\,\,u=b\,\,{\rm on}\,\,\{a\}\times[0,\beta].$$
There exists a unique solution $u=h_b$ to this problem. The harmonic function $h_b$ has a conjugate harmonic function $h_b^*$ which is multi-valued on $(N,g)$. But $h_b^*$ is well-defined on its universal cover $\widetilde{N}=I_a$. Let $\tau(b)>0$ be the period of $h_b^*$ on $N$. $\tau(b)$ is an increasing function which varies from 0 to $\infty$ as $b$ does so. Hence there exists $\bar{b}>0$ such that  $\tau(\bar{b})=\beta$. Note that  $h_{\bar{b}}$ can also be lifted to $h_{\bar{b}}$ on $I_a$. Then the map $H:I_a\rightarrow I_{\bar{b}}$ defined  by $H(q)=(h_{\bar{b}}(q),h_{\bar{b}}^*(q))$ is a periodic homeomorphism and yields a conformal map $\varphi\circ H^{-1}:I_{\bar{b}}\rightarrow\varphi(I_a)$. Note that $H^{-1}$ has period $\beta$ and $\varphi\circ H^{-1}$ is invariant under the screw motion $\sigma$. This completes the proof of the proposition.
\end{proof}
In order to prove the existence of an area-minimizing surface spanning $\Gamma$, let's define
$$a_\Gamma=\inf_{\varphi\in\mathcal{C}_{a,\Gamma} ,\,a>0}A(\varphi)\,\,\,\,{\rm and}\,\,\,\,d_\Gamma=\inf_{\varphi\in\mathcal{C}_{a,\Gamma},\,a>0}D(\varphi).$$
Then by (\ref{A<D}) and the existence of the isothermal coordinates we have
$$a_\Gamma=\frac{1}{2}\,d_\Gamma.$$
Therefore
$$D(\psi)=d_\Gamma\,\,{\rm for}\,\,{\rm some}\,\,\psi\in\mathcal{C}_{a,\Gamma} \,\, \Longleftrightarrow \,\,A(\psi)=a_\Gamma\,\,{\rm and}\,\, \psi\,\,{\rm is\,\,almost\,\,conformal}.$$
Thus, to solve the periodic Plateau problem it suffices to find $\bar{a}>0$ and a map $\psi\in\mathcal{C}_{\bar{a},\Gamma}$ which minimizes the Dirichlet integral $D(\varphi)$  on $[0,{a}]\times[0,\beta]$ among all $\varphi$ in $\mathcal{C}_{a,\Gamma}$ and all $a>0$.
First we shall fix $a>0$ and apply the periodic Dirichlet principle on $\mathcal{C}_{a,\Gamma}$ as follows.
\begin{lemma}\label{lemma1}
For each admissible map $\varphi$ in $\mathcal{C}_{a,\Gamma}$ there exists a unique harmonic admissible map $\psi\in\mathcal{C}_{a,\Gamma}$ with $\psi|_{\partial I_a}=\varphi|_{\partial I_a}$. Moreover, $D(\psi)\leq D(\varphi)$.
\end{lemma}
\begin{proof}
Let $x,y$ be the Euclidean coordinates of $\mathbb R^2$ and set $t=x+iy$. Define
$$f_1(t)=e^{\pi it/a}\,\,\,{\rm and}\,\,\,f_2(z)=\frac{iz+1}{z+i}.$$
Then $z=f_1(t)$ maps the infinite vertical strip $I_a$ one-to-one onto the upper half plane $\{{\rm Im}\,z\geq0\}\setminus\{0\}$ and $w=f_2(z)$ maps $\{{\rm Im}\,z\geq0\}\setminus\{0\}$ one-to-one onto the unit disk $\{|w|\leq1\}\setminus\{ i,-i\}$. Furthermore, we see that $f_2(f_1(\partial I_a))=\{|w|=1\}\setminus\{i,-i\}$. Let's consider the vector-valued Dirichlet problem for $u=(u_1,u_2,u_3)$ in $D:=\{w:|w|<1\}$:
\begin{equation}\label{bc0}
\Delta u=0\,\,{\rm in}\,\,D,\,\,\,\,\,\,u=\varphi\circ {f_1}^{-1}\circ f_2^{-1}\,\,{\rm on}\,\,\partial D,\,\,\,\,\,\,\varphi=(\varphi_1,\varphi_2,\varphi_3).
\end{equation}
Since $\varphi$ satisfies $\varphi(x,y+k\beta)=\sigma^k(\varphi(x,y))$ for the screw motion $\sigma$ defined by (\ref{heli}),
we see that $\varphi_1,\varphi_2$ are bounded and
\begin{equation}\label{y=x}
\varphi_3(x,y+k\beta)=\varphi_3(x,y)+ke.
\end{equation}

The Dirichlet problem (\ref{bc0}) has a unique bounded solution for $u_1,u_2$ because of the boundedness of $\varphi_1,\varphi_2$. Even though $\varphi_3$ is unbounded, by (\ref{y=x}) $\varphi_3-\frac{e}{\beta }y$ is bounded and periodic in $I_a$. So if the Dirichlet problem
\begin{equation}\label{bc1}
\Delta v=0 \,\,{\rm in}\,\,I_a,\,\,\,\,\,\,v=\varphi_3-\frac{e}{\beta }y\,\,{\rm on}\,\, \partial I_a
\end{equation}
has a bounded solution, it must be unique and periodic. To find its bounded solution, we convert it to a new Dirichlet problem on $D$:
\begin{equation}\label{bc2}
\Delta w =0\,\,{\rm in}\,\, D,\,\,\,\,\,\,w=(\varphi_3-\frac{e}{\beta}y)\circ {f_1}^{-1}\circ f_2^{-1}\,\,{\rm on}\,\,\partial D.
\end{equation}
The boundedness of $(\varphi_3-\frac{e}{\beta}y)\circ {f_1}^{-1}\circ f_2^{-1}$ gives the existence of a unique bounded solution $w=\tilde{h}_3$ to (\ref{bc2}). As $\frac{e}{\beta}y\circ f_1^{-1}\circ f_2^{-1}$ is harmonic in $D$, it is easy to see that   $u_3:=\tilde{h}_3+\frac{e}{\beta}y\circ f_1^{-1}\circ f_2^{-1}$ is the third component of a desired solution to (\ref{bc0}).

Pulling back $(u_1,u_2,u_3)$ by $f_2\circ f_1$ to $I_a$, one can obtain a harmonic map $\psi:I_a\rightarrow\mathbb R^3$ having the same boundary value as $\varphi$ on $\partial I_a$. We now show  that $\psi$ is invariant under the screw motion $\sigma$, in other words, $$\psi(x,y+\beta)=\sigma(\psi(x,y)).$$
Let $h_1,h_2,h_3:I_a\rightarrow\mathbb R$ be the harmonic components of $\psi$, that is,
$$\psi(x,y)=(h_1(x,y),h_2(x,y),h_3(x,y)).$$
(One easily sees that $h_3=\tilde{h}_3\circ f_2\circ f_1+\frac{e}{\beta}y$.)
Define
$$\psi_A(x,y)=\psi(x,y+\beta)\,\,{\rm and}\,\,\psi_B(x,y)=\sigma(\psi(x,y)).$$
Since $\tilde{h}_3\circ f_2\circ f_1$ is periodic with period $\beta$, we have
$$h_3(x,y+\beta)=h_3(x,y)+e.$$
So the third component of $\psi_A(x,y)$ equals that of $\psi_B(x,y)$.
On the other hand,
$$\psi_B(x,y)=(\cos\beta\, h_1(x,y)-\sin\beta\, h_2(x,y),\sin\beta \,h_1(x,y)+\cos\beta\, h_2(x,y),h_3(x,y)+e).$$
Hence $\psi_A,\psi_B$ are harmonic maps. As $h_1,h_2$ are bounded, so is $\psi_A-\psi_B$. Since $\sigma(\Gamma)=\Gamma$, $\psi_A-\psi_B$ vanishes on $\partial I_a$. Then $(\psi_A-\psi_B)\circ f_1^{-1}\circ f_2^{-1}$ is a bounded harmonic map vanishing on $\partial D$ and so $\psi_A-\psi_B\equiv0$.  Therefore $\psi$ is invariant under $\sigma$.
It follows that $\psi$ is a unique admissible harmonic map in $\mathcal{C}_{a,\Gamma}$ having the same boundary values as $\varphi$.

Set $\Phi=\varphi-\psi$. Then $\Phi$ is also invariant under $\sigma$ and hence
$$D(\varphi)=D(\Phi)+D(\psi)+2D(\Phi,\psi)$$
where
$$D(\Phi,\psi)=\int\int_{[0,a]\times[0,\beta]}\left(\langle\frac{\partial\Phi}{\partial x},\frac{\partial\psi}{\partial x}\rangle+\langle\frac{\partial\Phi}{\partial y},\frac{\partial\psi}{\partial y}\rangle\right)dxdy.$$
Green's identity implies that
$$D(\Phi,\psi)=\int_{\partial([0,a]\times[0,\beta])}\langle\Phi,\frac{\partial\psi}{\partial\nu}\rangle ds-\int\int_{[0,a]\times[0,\beta]}\langle\Phi,\Delta\psi\rangle dxdy,$$
where $\nu$ is the outward unit normal to $\partial([0,a]\times[0,\beta])$.
But
$$\Phi=0 \,\,{\rm on}\,\, \{0,a\}\times[0,\beta]\,\,\,\,\,\,{\rm and}\,\,\,\,\,\,\frac{\partial\psi}{\partial\nu}\big|_{[0,a]\times\{\beta\}}=-\frac{\partial\psi}{\partial\nu}\big|_{[0,a]\times\{0\}}$$
because of the invariance of $\psi$ under $\sigma$.
Hence  $D(\Phi,\psi)=0$. It then follows that
$$D(\psi)\leq D(\varphi),$$
which completes the proof of the lemma.
\end{proof}

Define
$$d_{a,\Gamma}={\rm inf}_{\varphi\in \mathcal{C}_{a,\Gamma}}D(\varphi).$$
We claim here that $d_{a,\Gamma}$ goes to infinity as $a\rightarrow\infty$ and as $a\rightarrow0$.
\begin{eqnarray*}
D(\varphi)&\geq&\int_0^a\int_0^\beta|\varphi_y|^2dydx=\int_0^a\int_0^\beta\sum_{i=1}^3\left(\frac{\partial\varphi_i}{\partial y}\right)^2dydx\\
&\geq&\frac{1}{\beta}\int_0^a\left(\int_0^\beta\frac{\partial\varphi_3}{\partial y}dy\right)^2dx=\frac{1}{\beta}\int_0^a(\varphi_3(x,\beta)-\varphi_3(x,0))^2dx\\
&=&\frac{ae^2}{\beta}.
\end{eqnarray*}
So $\lim_{a\rightarrow\infty}d_{a,\Gamma}=\infty$. On the other hand,
\begin{eqnarray*}
D(\varphi)&\geq&\int_0^\beta\int_0^a|\varphi_x|^2dxdy=\int_0^\beta\int_0^a\sum_{i=1}^3\left(\frac{\partial\varphi_i}{\partial x}\right)^2dxdy\\
&\geq&\frac{1}{a}\int_0^\beta\sum_{i=1}^3\left(\int_0^a\frac{\partial\varphi_i}{\partial x}dx\right)^2dy=\frac{1}{a}\int_0^\beta\sum_{i=1}^3(\varphi_i(a,y)-\varphi_i(0,y))^2dy\\
&\geq&\frac{\beta d^2}{a},
\end{eqnarray*}
where $d$ is the distance between the two components $\gamma_0$, $\gamma_1$ of $\Gamma$ which are written as $\gamma_0=\varphi(\{0\}\times\mathbb R), \gamma_1=\varphi(\{a\}\times\mathbb R)$.
Hence $\lim_{a\rightarrow0}d_{a,\Gamma}=\infty$ as well.

Therefore we can conclude that there exists a positive constant $\bar{a}$ such that
$$d_\Gamma=d_{\bar{a},\Gamma}.$$
To finish the proof of Theorem \ref{main} we need the following.
\begin{lemma}\label{lemma2}
Let $M$ be a constant $>d_{\Gamma}$. Then for any $a>0$ the family of functions
$$\mathcal{F}_a=\{\varphi|_{\partial I_{{a}}}:\varphi\in\mathcal{C}_{{a},\Gamma},\,\,D(\varphi)\leq M\}$$
is compact in the topology of uniform convergence.
\end{lemma}
\begin{proof}
For each $z\in \partial I_{{a}}$ and each $r>0$, define $C_r$ to be the intersection of $I_{{a}}$ with the circle of radius $r$ centered at $z$, and denote by $s$ the arc length parameter of $C_r$. Choose any $\varphi\in\mathcal{C}_{{{a}},\Gamma}$ with $D(\varphi)\leq M$. For $0<\delta<{\rm min}(1,{a}^2)$, consider the integral
$$K:=\int_\delta^{\sqrt{\delta}}\int_{C_r}|\varphi_s|^2ds\,dr\leq D(\varphi)\leq M.$$
One can see that
$$K=\int_\delta^{\sqrt{\delta}}f(r)\,d(\log r),\,\,\,\,f(r):=r\int_{C_r}|\varphi_s|^2ds.$$
By the mean value theorem there exists $\rho$ with $\delta\leq\rho\leq\sqrt{\delta}$ such that
$$K=f(\rho)\int_\delta^{\sqrt{\delta}}d(\log r)=\frac{1}{2}\,f(\rho)\log(\frac{1}{\delta}).$$
Hence
$$\int_{C_\rho}|\varphi_s|^2ds\leq \frac{2M}{\rho\log(\frac{1}{\delta})}.$$
Denote the length of the curve $\varphi(C_r)$ by $L(\varphi(C_r))$. Then $L(\varphi(C_\rho))=\int_{C_\rho}|\varphi_s|ds$ and from the Cauchy-Schwarz inequality it follows that
\begin{equation}\label{lc}
L(\varphi(C_\rho))^2\leq\frac{2\pi M}{\,\log(\frac{1}{\delta})}.
\end{equation}

Given a number $\varepsilon>0$, by the compactness of $\Gamma/\langle\sigma\rangle$ we see that there exists $d>0$ such that for any $p,p'$ in $\Gamma$ with $0<|pp'|<d$, the diameter of the bounded component of $\Gamma\setminus\{p,p'\}$ is smaller than $\varepsilon$. Choose $\delta<{\rm min}(1,{a}^2)$ such that ${\frac{2\pi M}{\log(\frac{1}{\delta})}}<d^2$. Then for any $z\in\partial I_{{a}}$, there exists  a number $\rho$ with $\delta<\rho<\sqrt{\delta}$ such that by (\ref{lc}), $L(\varphi(C_\rho))<d$.
Let $E_z$ be the interval in $\partial I_a$ between $z_1$ and $z_2$, the two endpoints of $C_\rho$. Then $|\varphi(z_1)\varphi(z_2)|<d$ and hence the diameter of $\varphi(E_z)$ is smaller than $\varepsilon$. Therefore for any $z,z'\in\partial I_{{a}}$ with $|z-z'|<{\delta}$ and $z$ being the center of $C_\rho$, we have $\varphi(z),\varphi(z')\in\varphi(E_z)$ and thus
$$|\varphi(z)-\varphi(z')|<\varepsilon.$$
Since $\delta$ was chosen independently of $z,z'$ and $\varphi$, we obtain the equicontinuity of $\mathcal{F}_a$.

In Douglas's solution for the existence of a conformal harmonic map $\varphi:D\rightarrow\mathbb R^n$ spanning a Jordan curve $\Gamma$, it was essential to prescribe $\varphi(z_i)=p_i$ for arbitrarily chosen points $z_1,z_2,z_3\in\partial D$ and $p_1,p_2,p_3\in\Gamma$. This was for the purpose of deriving the equicontinuity in a minimizing sequence. Fortunately we do not need this kind of prescription for our compact set $\Gamma/\langle\sigma\rangle$ as $\varphi(I_a)/\langle\sigma\rangle$ is not a disk. Yet we need to avoid an unwanted situation resulting from the disconnectedness of $\Gamma$: we have to show that $\varphi(\{a\}\times\mathbb R)$ does not drift away from $\varphi(\{0\}\times\mathbb R)$ (recall that $\varphi(0,0)$ is fixed). This can be done by deriving a length bound from a bound on $D(\varphi)$ as above.

For each $y\in [0,\beta]$ let $\ell_y$ denote the line segment $[0,a]\times \{y\}$. Choose $\varphi\in\mathcal{C}_{{{a}},\Gamma}$ and suppose $D(\varphi)\leq M$. Consider the integral
$$K:=\int_0^{{\beta}}\int_{\ell_y}|\varphi_x|^2dx\,dy\leq D(\varphi)\leq M.$$
Then
$$K=\int_0^{{\beta}}\tilde{f}(y)\,dy,\,\,\,\,\,\tilde{f}(y):=\int_{\ell_y}|\varphi_x|^2dx.$$
The mean value theorem implies that there exists $0<\bar{y}<\beta$ such that
$$K=\beta\,\tilde{f}(\bar{y})\leq M.$$
Hence
\begin{equation}\label{length}
L(\varphi(\ell_{\bar{y}}))^2=\left(\int_{\ell_{\bar{y}}}|\varphi_x|dx\right)^2\leq a\,\tilde{f}(\bar{y})\leq\frac{aM}{\beta}.
\end{equation}
We say that $\varphi(\{a\}\times\mathbb R)$ drifts away from $\varphi(\{0\}\times\mathbb R)$ if $\lim_{x\rightarrow a}|\varphi_3(x,y)|=\infty$ for some $0\leq y\leq\beta$. Therefore (\ref{length}) means that no drift occurs under $\varphi$ if $D(\varphi)$ is bounded, as claimed. Thus by Arzela's theorem the equicontinuity yields the compactness of $\mathcal{F}_a$. This completes the proof of Lemma \ref{lemma2}.
\end{proof}
Finally, let $\{\varphi_n\}$ be a minimizing sequence in $\mathcal{C}_{\bar{a},\Gamma}$, that is, $\lim_{n\rightarrow\infty}D(\varphi_n)=d_\Gamma$. From Lemma \ref{lemma2} it follows that there exists a subsequence $\{\varphi_{n_i}\}$ such that $\{\varphi_{n_i}|_{\partial I_{\bar{a}}}\}$ converges uniformly to $\bar{\varphi}|_{\partial I_{\bar{a}}}$ for some $\bar{\varphi}\in\mathcal{C}_{\bar{a},\Gamma}$. By Lemma \ref{lemma1} there exist harmonic maps $\psi_{i},\psi$ $\in\mathcal{C}_{\bar{a},\Gamma}$ such that
$$\psi_{i}|_{\partial I_{\bar{a}}}=\varphi_{n_i}|_{\partial I_{\bar{a}}},\,\,\,\,\,D(\psi_i)\leq D(\varphi_{n_i}),\,\,\,\,\,\psi|_{\partial I_{\bar{a}}}=\bar{\varphi}|_{\partial I_{\bar{a}}},\,\,\,\,\,\psi=\lim_{i\rightarrow \infty}\psi_{i}.$$
Then Harnack's principle gives
$$D(\psi)\leq\lim\inf_iD(\psi_{i})\leq d_\Gamma.$$
Consequently, $D(\psi)=d_\Gamma$ and so $\psi$ is almost conformal and harmonic. This completes the proof of Theorem \ref{main} when $\Gamma$ is helically periodic and therefore when it is translationally periodic as well. Since $\psi$ is periodically area minimizing in $\mathbb R^3$ it has no interior branch point (see \cite{G}).

Let $U$ be a convex polyhedral domain in $\mathbb R^3$ and $\bar{\gamma}:=\Gamma\cap U$ a fundamental piece of $\Gamma$. While a helically periodic $\Gamma$ has only two components, the fundamental piece $\bar{\gamma}$ of a rotationally (or reflectively) periodic $\Gamma$ may have more than two components. Let $\bar{\gamma}_1,\bar{\gamma}_2,\bar{\gamma}_3,\ldots,\bar{\gamma}_n$ be the components of $\bar{\gamma}$. For $i=1,\ldots,n$, let $p_i^1,p_i^2$ be the endpoints of $\bar{\gamma}_i$. Reordering $i=1,\ldots,n$ if necessary, we may assume that the line segment $\overline{p_i^1p_{i+1}^2}$ is in a planar face of $U$ connecting $\bar{\gamma}_i$ to $\bar{\gamma}_{i+1}$ and that $\Gamma_0:=\bar{\gamma}_1\cup\cdots\cup\bar{\gamma}_n\cup\overline{p_1^1p_2^2}\cup\overline{p_2^1p_3^2}\cup\cdots\cup
\overline{p_{n-1}^1p_n^2}\cup\overline{p_n^1p_1^2}$ is a Jordan curve in ${U}\cup\partial U$. There exists a Douglas solution $\Sigma_a$ spanning $\Gamma_0$. If $\Gamma$ is rotationally periodic, $\Gamma$ can be recaptured by indefinitely rotating $\bar{\gamma}_1\ldots,\bar{\gamma}_n$ $180^\circ$ around $\overline{p_1^1p_2^2},\ldots,\overline{p_n^1p_1^2}$ and around the corresponding line segments in the adjacent polyhedra. If we perform the same indefinite rotations on $\Sigma_a$ as we obtain $\Gamma$ from $\bar{\gamma}_1,\ldots,\bar{\gamma}_n$, then $\Sigma_a$ gives rise to a rotationally periodic minimal surface $\Sigma$ spanning $\Gamma$, as desired.

Suppose $\Gamma$ is reflectively periodic with fundamental piece $\bar{\gamma}$ in $U$. By the theorem of existence of minimizers for the free boundary problem (see Section 5.3 of \cite{DHKW}) there exists a minimal surface $\Sigma_b$ of least area in $U$ such that $\partial\Sigma_b\setminus\partial U=\bar{\gamma}$ and $\Sigma_b$ is perpendicular to $\partial U$ along $\partial\Sigma_b\cap\partial U$. Apply the same indefinite reflections to $\Sigma_b$ as we do to $\bar{\gamma}$ to get $\Gamma$. Then we can obtain a reflectively periodic minimal surface $\Sigma$ spanning $\Gamma$, as desired.
\end{proof}

\begin{remark}
(a) One can similarly consider a disjoint union $\Gamma$ of complete simple curves in the hyperbolic space $\mathbb H^3$ and in the sphere $\mathbb S^3$. $\Gamma$ can be compact in $\mathbb S^3$. One easily sees that Theorem \ref{main} still holds for $\Gamma$ in $\mathbb H^3$ and in $\mathbb S^3$.

(b) A periodic minimal surface (and its boundary) may be partly rotationally periodic and partly reflectively periodic.

(c) A helically periodic $\Sigma$ spanning $\Gamma$ with invariance group $\langle\sigma\rangle$ gives rise to the quotient surface $\Sigma/\langle\sigma\rangle$ and the quotient boundary $\Gamma/\langle\sigma\rangle$.
\end{remark}

\section{Uniqueness and Embeddedness}
Under what condition can $\Gamma$ guarantee the uniqueness and embeddedness of the periodic Plateau solution $\Sigma$? For the Douglas solution with Jordan curve $\Gamma$ Nitsche \cite{N} and Ekholm-White-Wienholtz \cite{EWW} proved the uniqueness and the embeddedness, respectively, if the total curvature of $\Gamma\leq4\pi$. But even before Douglas, Rad\'{o} \cite{R} showed that the Dirichlet solution of the minimal surface equation for any continuous boundary data over the boundary of a convex domain in $\mathbb R^2$ exists as a graph, which is obviously unique and embedded. In the same spirit, we have a partial answer for our periodic Plateau problem as follows.

\begin{theorem}\label{plateau}
Let $\gamma_0$ be the $x_3$-axis and $\gamma_1$ a complete connected curve winding around $\gamma_0$. Define $\Gamma=\gamma_0\cup\gamma_1$ and let $\tau$ be a vertical translation by $e$. If $\Gamma$ is translationally periodic with respect to $\tau$ and a fundamental piece of $\gamma_1$ admits a one-to-one orthogonal projection onto a convex closed curve in the $x_1x_2$-plane, then the translationally periodic minimal surface $\Sigma$ spanning $\Gamma$ has the following properties:
\begin{itemize}
\item[(a)] The Gaussian curvature of $\Sigma$ is negative at any point $p\in\gamma_0$;
\item[(b)] $\Sigma$ is embedded and  its fundamental region (not including $\gamma_0$) is a graph over its projection onto the $x_1x_2$-plane;
\item[(c)] $\Sigma$ is unique.
\end{itemize}
\end{theorem}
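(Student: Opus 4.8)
The plan is to prove (b) first — whence embeddedness is immediate — then to read off (a) from the local geometry near $\gamma_0$, and finally to deduce (c) from the uniqueness of solutions of the Dirichlet problem for the minimal surface equation. It is convenient to pass to the quotient: since $\Sigma$ is $\tau$-invariant, set $Q=\mathbb R^3/\langle\tau\rangle\cong\mathbb R^2\times S^1$ with $S^1=\mathbb R/e\mathbb Z$, and $\bar\Sigma=\Sigma/\langle\tau\rangle$. As $\Sigma$ is a simply connected strip on which $\tau$ acts by translation, $\bar\Sigma$ is a compact minimal annulus in $Q$ whose two boundary circles are $C_0:=\{0\}\times S^1$ (the image of $\gamma_0$) and $\bar\gamma_1$ (the image of $\gamma_1$), the latter projecting bijectively onto the convex Jordan curve $C=\partial\Omega$ under the horizontal projection $\pi$. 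The coordinates $x_1,x_2$ are globally defined harmonic functions on any minimal surface in $Q$, and since $\pi(\partial\bar\Sigma)\subset\bar\Omega$ with $\Omega$ convex, the maximum principle gives $\pi(\bar\Sigma)\subset\bar\Omega$; moreover $\rho^2=x_1^2+x_2^2$ is subharmonic on $\bar\Sigma$, so an interior zero of $\rho^2$ would be a critical point, forcing $d\pi=0$ there and collapsing the tangent plane — impossible. Hence $\bar\Sigma\cap\gamma_0=C_0$ and $\pi$ maps $\bar\Sigma\setminus C_0$ into $\Omega\setminus\{0\}$. In this language, (b) is the assertion that $\pi:\bar\Sigma\setminus C_0\to\Omega\setminus\{0\}$ is a diffeomorphism, with $C_0$ the limit at the puncture.

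For (b) I would run Rado's argument with vertical planes replaced by the minimal flat cylinders $H_{v,a}=\{\langle(x_1,x_2),v\rangle=a\}\subset Q$. For a unit $v$ and $a\in\mathbb R$ the function $f=\langle(x_1,x_2),v\rangle-a$ is harmonic on $\bar\Sigma$; convexity of $\Omega$ gives $\#(H_{v,a}\cap\bar\gamma_1)\le 2$, while $H_{v,a}\cap C_0$ is empty or all of $C_0$ according as $0\notin H_{v,a}$ or $0\in H_{v,a}$. Since $\bar\Sigma$ is nonplanar, $\{f=0\}$ has no closed component: a null-homotopic loop would bound a disk on which $f$ solves a homogeneous Dirichlet problem, hence $f\equiv 0$; a core-parallel loop, if $0\notin H_{v,a}$, would be forced to wind once around $0$ (as $\pi|_{\bar\gamma_1}$ does) while lying in the line $\{f=0\}\cap\mathbb R^2$, which is absurd, and if $0\in H_{v,a}$ would cut off an annulus containing $C_0$ on which $f$ again satisfies a homogeneous Dirichlet problem. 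So $\{f=0\}$ is a finite loopless graph with at most two leaves, all of them on $\bar\gamma_1$. If $\bar\Sigma$ had an interior point $\bar p$ with vertical tangent plane, choosing $v$ perpendicular to the horizontal tangent direction there (and $a$ accordingly) makes $\bar p$ a critical point of $f$ at level $0$, producing at least four arc-ends at $\bar p$; the identity $\sum_v(\deg v-2)=-2$ for the tree through $\bar p$ then forces at least four leaves, contradicting the bound two. Hence $\pi$ is an immersion on $\bar\Sigma\setminus C_0$; being proper onto $\Omega\setminus\{0\}$ and a bijection on the boundary, a covering-degree argument makes it a diffeomorphism. This is (b), and a graph is embedded, so $\Sigma$ is embedded.

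For (a), reflect $\Sigma$ across the straight line $\gamma_0$ — Schwarz reflection, i.e.\ the $180^\circ$-rotation $R$ about $\gamma_0$ — to obtain a minimal surface $\hat\Sigma=\Sigma\cup R(\Sigma)$ with $\gamma_0$ in its interior; fix $p\in\gamma_0$. Along $\gamma_0$ the tangent plane contains $\gamma_0'=\partial_{x_3}$, so it is a vertical plane $P$ through the $x_3$-axis, and $R(P)=P$, hence $T_p\hat\Sigma=P$. By (b), near $p$ each of $\Sigma$ and $R(\Sigma)$ is a single graph over an angular sector emanating from $\gamma_0$, so $\hat\Sigma\cap P$ near $p$ consists of exactly two smooth arcs crossing at $p$: the line $\gamma_0$ and one radial arc (the union of the two sector-arcs). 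For a minimal surface this is precisely the statement that the Gauss map is nondegenerate at $p$, i.e.\ $\mathrm{II}(p)\ne 0$, hence $K(p)<0$. (Equivalently: the unit normal along $\gamma_0$ is horizontal and, by (b), rotates at a nonzero rate, so $-2K=|d\nu|^2\ge|\tfrac{d}{dt}\nu(\gamma_0(t))|^2>0$.)

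For (c), (b) realizes $\Sigma$ as the graph of a solution $u$ of the minimal surface equation on $\Omega\setminus\{0\}$ with $u=g$ on $C$ (the data read off $\gamma_1$) and with the helical behavior $u(r,\theta)\sim\frac{e}{2\pi}\theta$ near $0$. Any translationally periodic minimal surface $\Sigma'$ spanning $\Gamma$ winds once per period near $\gamma_0$ by the topology of $\Gamma$, so after the same reduction it is the graph of a solution $u'$ with the same data on $C$ and the same leading term at $0$; then $w=u-u'$ satisfies a uniformly elliptic equation $\mathrm{div}(A\nabla w)=0$ on $\Omega\setminus\{0\}$, is bounded near $0$, extends across the puncture, and vanishes on $C$, so $w\equiv 0$ by the maximum principle; thus $\Sigma'=\Sigma$. (Alternatively one slides the vertical translates $\Sigma+(0,0,c)$, which foliate $(\Omega\setminus\{0\})\times\mathbb R$, against $\Sigma'$, using the interior maximum principle off $\gamma_0$ and its boundary form — again via Schwarz reflection — along $\gamma_0$.) I expect the real obstacle to be step (b): making the Rado-type argument watertight in $Q$ in the presence of the collapsing circle $C_0$ — excluding closed intersection components and handling tangent cylinders through the axis — and pinning down the precise limiting behavior of $\bar\Sigma$ onto $C_0$, on which (a) and (c) also rely.
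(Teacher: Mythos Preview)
Your overall plan --- Rado-type argument for (b), then reading off (a) and (c) --- is close in spirit to the paper's, but there is a genuine gap in your step showing the interior of $\bar\Sigma$ avoids $\gamma_0$. Subharmonicity of $\rho^2=x_1^2+x_2^2$ does \emph{not} rule out interior zeros: at a zero one has $x_1=x_2=0$, so $\nabla\rho^2=2x_1\nabla x_1+2x_2\nabla x_2=0$ is automatic and carries no information about $d\pi=(dx_1,dx_2)$. The helicoid strip $[-a,a]\times\mathbb R\ni(u,v)\mapsto(\sinh u\cos v,\sinh u\sin v,v)$ has the axis in its interior with $\rho^2\equiv 0$ there, yet $d\pi=(\cos v\,du,\sin v\,du)\ne 0$. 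So your sentence ``an interior zero of $\rho^2$ would \dots\ forc[e] $d\pi=0$'' is simply false, and the subsequent winding-number step (excluding a core-parallel loop in $\{f=0\}$ when $0\notin H_{v,a}$) leans on exactly this unproven fact that $\pi$ misses $0$ on the interior.

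The paper repairs this by reversing your order: it first proves there is no interior vertical tangent plane (your Rado step, but argued by a direct case analysis of the curves in $P\cap\Sigma$ rather than a tree-count, including a separate geometric argument when $\gamma_0\cap P=\emptyset$ that uses the winding of the inward conormal along $\gamma_0$ to locate an interior maximum of the distance to $P$). Only \emph{then} does it rule out interior intersections with $\gamma_0$, via a min--max: if some interior point maps to $\gamma_0$, a mountain-pass argument for $\rho^2$ produces a saddle at a \emph{positive} level $A>0$, where $\nabla\rho^2=0$ with $(x_1,x_2)\ne 0$ forces the ambient vector $(x_1,x_2,0)$ to be normal, i.e.\ the tangent plane is vertical --- contradicting the step just proved. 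This ordering is essential; your argument as written is circular at this point. The paper also proves (a) directly before (b), by the same style of case analysis on $Q_p\cap(\Sigma\setminus\gamma_0)$, rather than deducing it from (b).

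Your (c) is a legitimate alternative to the paper's sliding argument (translate one solution vertically until first contact, apply the boundary point lemma along $\gamma_0$), but you should justify that $w=u-u'$ is bounded near the puncture: both graphs are multivalued with the same period $e$ in $\theta$, so $w$ is single-valued, and boundedness follows once each fundamental region is shown to have bounded height --- but this again requires (b), hence ultimately the missing step above.
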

\begin{proof}
(a) $\gamma_0$ is parametrized by $x_3$. At any point $p(x_3)$ of $\gamma_0$, $\Sigma$ has a tangent {\it half plane} $Q_{p(x_3)}$. In a neighborhood of $p(x_3)$, $\Sigma$ is divided by $Q_{p(x_3)}$, like a half pie, into $m(\geq2)$ regions (see Figure 4). Define $\theta(x_3)$ to be the angle between $Q_{p(x_3)}$ and the positive $x_1$-axis. $\theta(x_3)$ is a well-defined analytic function satisfying $\theta(x_3+e)=\theta(x_3)+2\pi$. It is known (to be proved shortly) that
\begin{center}
\includegraphics[width=5.7in]{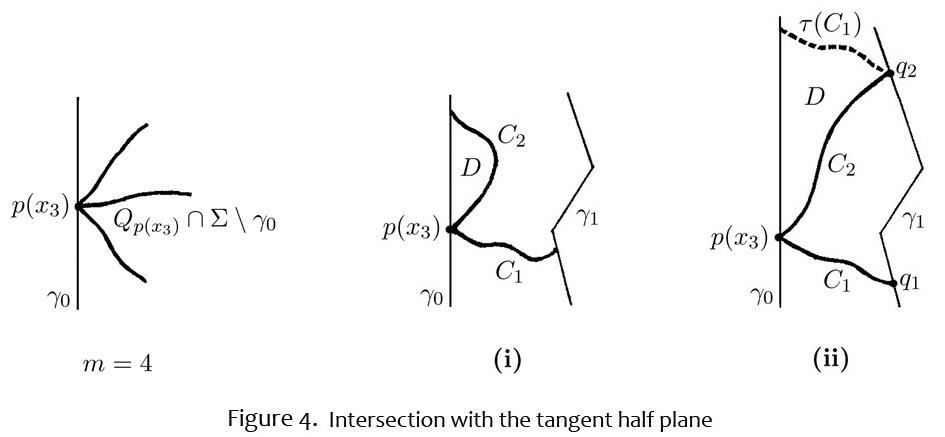}\\
\end{center}
\begin{equation}\label{equivalence}
m=2\,\,{\rm at}\,\,p(x_3) \,\,\, \Leftrightarrow\,\,\, K(x_3)<0 \,\,\, \Leftrightarrow\,\,\, \theta'(x_3)\neq0,
\end{equation}
where $K(x_3)$ is the Gaussian curvature of $\Sigma$ at $p(x_3)$.

We claim that $m\equiv2$ on $\gamma_0$. Suppose $m\geq3$ at $p(x_3)$ so that $Q_{p(x_3)}\cap\Sigma\setminus\gamma_0$ is the union of at least two analytic curves $C_1,C_2,...,C_k$ emanating from $p(x_3)$. Since
$Q_{p(x_3)}$ intersects $\gamma_1$, at least one of $C_1,C_2,...,C_k$ should reach $\gamma_1$. So we have two
possibilities: either {\bf (i)} only one of them, say $C_1$, reaches $\gamma_1$, or {\bf (ii)} two of them, say $C_1,C_2$, reach $\gamma_1$ (see Figure 4). In the first case, since $C_2$ is disjoint from $\gamma_1$ and
translationally periodic, it cannot be unbounded and should be in a fundamental region of $\Sigma$. Hence $C_2$ comes back to $\gamma_0$.
$C_2$ and $\gamma_{0}$ should then bound a domain $D\subset\Sigma$ with $\partial D\subset {Q_{p(x_3)}}$ as $\Sigma$ is simply connected. But this contradicts the maximum principle because $D$ has a point which attains the maximum distance from $Q_{p(x_3)}$. In case of {\bf (ii)}, set $C_1\cap\gamma_1=\{q_1\}$ and $C_2\cap\gamma_1=\{q_2\}$. Denote by $\pi$ the projection onto the $x_1x_2$-plane. Due to the convexity of $\pi(\gamma_1)$, $Q_{p(x_3)}$ intersects any fundamental piece of $\gamma_1$ only at one point. Therefore $\{q_1,q_2\}$ should be the boundary of a fundamental piece of $\gamma_1$. Hence $\tau(q_1)=q_2$, interchanging $q_1$ and $q_2$ if necessary. So the two curves $\tau(C_1)$ and $C_2$ meet at $q_2$. Then $\tau(C_1)$, $C_2$ and $\gamma_0$ bound a domain $D\subset\Sigma$. Again $\partial D$ is a subset of $Q_{p(x_3)}$, which is a contradiction to the maximum principle. Therefore $m\equiv2$ on $\gamma_0$, as claimed.

To give a proof of the equivalences (\ref{equivalence}), let's view $\Sigma$ in a neighborhood of $p\in\gamma_0$ as a graph over $Q_p$, the tangent half plane of $\Sigma$ at $p$. Introduce $x,y,z$ as the coordinates of $\mathbb R^3$ such that $z\equiv0$ on $Q_p$, $x\equiv0$ on $\gamma_0$ and $p=(0,0,0)$. Then $\Sigma$ is the graph of an analytic function  $z=f(x,y)$ and the lowest order term of its Taylor series is $f_m(x,y)=c_m\,{\rm Im}(x+iy)^m, m\geq2$,  when $m$ is an even integer and $f_m(x,y)=c_m\,{\rm Re }(x+iy)^m$ when $m$ is odd. It follows that $\Sigma$ is divided by $Q_p$ into $m$ regions in a neighborhood of $p$ and that $K(p)=0$ if $m\geq3$ and $K(p)<0$ if $m=2$, which is the first equivalence in (\ref{equivalence}).  Hence $K<0$ on $\gamma_0$ by the claim above and this proves (a). The second equivalence  follows from the expression for the Gaussian curvature in terms of the Weierstrass data on $\Sigma$, a 1-form $fdz$ and the Gauss map $g$:
\begin{equation}\label{K}
K=-\frac{16|g'|^2}{|f|^2(1+|g|^2)^4}.
\end{equation}

(b) First we show that $\Sigma\setminus\gamma_0$ has no vertical tangent plane.  Suppose not; let $q$ be an interior point of $\Sigma$ at which the tangent plane ${P}$ is vertical. Remember that $\pi(\gamma_1)$ is convex. Hence ${P}$ intersects $\gamma_1$ only at two points in its fundamental piece. ${P}\cap\Sigma$ is locally the union of at least four curves $C_1,\ldots,C_k, k\geq4,$ emanating from $q$, and two of them should reach $\gamma_1$. If we assume only four curves emanate from
$q$ in ${P}\cap\Sigma$, two of them will reach $\gamma_1$ and then either the remaining two will reach $\gamma_0$ or they will be connected to each other by the translation $\tau$ as in Figure 5: {\bf (i)} $C_1$,
$C_2$ will intersect $\gamma_1$ and  $C_3,C_4$ will intersect $\gamma_{0}$; {\bf (ii)} $C_1,C_2$ will intersect $\gamma_1$ and
$C_3,C_4$ will be disjoint from $\gamma_0\cup\gamma_{1}$ so that $C_4$ will be connected to $\tau(C_3)$. In case
of {\bf (i)}, $C_3\cup C_4\cup\gamma_{0}$ will bound a domain $D\subset\Sigma_{}$. But this is a contradiction to the maximum principle since $\partial D\subset {P}$.  In case of {\bf (ii)}, ${\gamma}_{0}$ is disjoint from ${P}$. Then ${\gamma}_{0}$ and ${P}\cap\Sigma_{}$ bound an infinite strip $S\subset\Sigma_{}$ lying on one side of ${P}$. Since $S/\langle\tau\rangle$ is compact, there exists a point $p_S\in S$ which has the maximum distance from ${P}$ among all points of $S$. $\gamma_0$ is a constant distance away from $P$ and the inward unit conormals to $\gamma_0$ on $\Sigma$ wind around it once in its fundamental piece. So there is a point in $\gamma_0$ at which the inward unit conormal to $\gamma_0$ points away from $P$. Then in
\begin{center}
\includegraphics[width=3.11in]{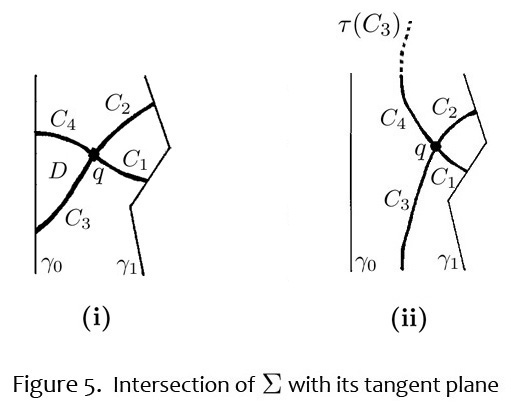}\\
\end{center}
that direction the distance from $P$ increases, hence $p_S$ is not a point of $\gamma_0$ but an interior point of $S$. However, this contradicts the maximum principle. Consequently, no tangent plane to $\Sigma$ can be vertical at any point of $\Sigma_0$. Even if ${P}\cap\Sigma$ consists of six curves or more, the same argument works.

We now show that the interior  of $\Sigma$ does not intersect $\gamma_0$. Let $\psi:[0,\bar{a}]\times\mathbb R\rightarrow\mathbb R^3$ be the periodically area minimizing conformal harmonic map such that $\psi([0,\bar{a}]\times\mathbb R)=\Sigma$, $\psi(\{0\}\times\mathbb R)=\gamma_0$ and $\psi(\{\bar{a}\}\times\mathbb R)=\gamma_1$. Suppose there exists an interior point $p\in(0,\bar{a})\times\mathbb R$ such that $\Sigma$ intersects $\gamma_0$ at $\psi(p)$. Define $f(q)=x_1(q)^2+x_2(q)^2$ for $q\in\Sigma$. Let $\mathcal{F}$ be the family of all arcs on $\Sigma$ connecting $\gamma_0$ to $\psi(p)$. Let's find a saddle point in $\Sigma$ for the function $f$. Define
$$A={\rm min}_{\alpha\in\mathcal{F}}\,{\rm max}_{q\in\alpha}f(q).$$
Clearly there exists a saddle point $q_0$ in $\Sigma$ such that $f(q_0)=A$. Suppose $A=0$. Then there is an arc $\tilde{\alpha}\subset[0,\bar{a}]\times\mathbb R$ connecting $\{0\}\times\mathbb R$ to $p$ such that $f\equiv0$ on $\psi(\tilde{\alpha})$. Since $\Sigma$ is periodically area minimizing, it has no interior branch point. Neither does $\Sigma$ have a boundary branch point on $\gamma_0$. Hence $\psi$ is an immersion on $[0,\bar{a})\times\mathbb R$. But $\psi$ maps $(\{0\}\times\mathbb R)\cup\tilde{\alpha}$ onto $\gamma_0$ if $f\equiv0$ on $\psi(\tilde{\alpha})$. This is not possible for the immersion $\psi$. Hence $A$ cannot be equal to $0$. Since $\nabla f=0$ at $q_0$, the tangent plane to $\Sigma$ at $q_0$ is parallel to $\gamma_0$ and hence it must be vertical. This is a contradiction. Therefore  the interior of $\Sigma$ does not intersect $\gamma_0$.

Henceforth we show that $\hat{\Sigma}\setminus\gamma_{0}$ is a graph over the $x_1x_2$-plane, where $\hat{\Sigma}$ is a fundamental region of $\Sigma$. By (a) we know that $m\equiv2$ on $\gamma_0$. Hence, given a vertical half plane $Q$ emanating from $\gamma_{0}$ and a suitably chosen fundamental region $\hat{\Sigma}$ of $\Sigma$, $\overline{Q\cap\hat{\Sigma}\setminus\gamma_0}$ is a single smooth curve joining $\gamma_{0}$ to $\gamma_1$. Since the interior of $\Sigma$ does not intersect $\gamma_0$, the projection map $\pi|_{Q\cap\hat{\Sigma}\setminus\gamma_0}$ is one-to-one near $\gamma_0$. As $\pi(\gamma_1)$ is convex and $\pi|_{\hat{\Sigma}\cap\gamma_1}$ is one-to-one, hence $\pi(\Sigma)$ lies inside $\pi(\gamma_1)$ and $\pi|_{\hat{\Sigma}}$ is one-to-one near $\gamma_1$. Suppose the curve $Q\cap\hat{\Sigma}\setminus\gamma_0$ contains a point $p$ at which its tangent line is vertical. Then the tangent plane to $\Sigma$ at $p$ is also vertical, which is a contradiction. Hence $Q\cap\hat{\Sigma}\setminus\gamma_0$ admits a one-to-one projection into $\pi(Q)$ for all $Q$. It follows that $\hat{\Sigma}\setminus{\gamma}_{0}$ is a 2-dimensional graph over $\pi(\hat{\Sigma}\setminus{\gamma}_{0})$.  Hence $\Sigma$ is embedded.

(c) Suppose there exist two periodic Plateau solutions $\Sigma_1,\Sigma_2$ spanning $\Gamma$. Assume that their fundamental regions $\hat{\Sigma}_1,\hat{\Sigma}_2$ are the graphs of analytic functions $f_1,f_2:D\subset x_1x_2$-plane $\rightarrow\mathbb R$, $D:=\pi({\Sigma}_1\setminus\gamma_0)=\pi({\Sigma}_2\setminus\gamma_0)$. Assume also that $f_1\geq f_2$. If there exists an interior point $p\in D$ such that $(f_1-f_2)(p)={\rm max}_{q\in D}(f_1-f_2)(q)$, we have a contradiction to the maximum principle. Hence $f_1-f_2$ has no interior maximum in $D$. Since $f_1-f_2\equiv0$ on $\pi(\gamma_1)$, it can have a maximum only at $\pi(\gamma_0)=(0,0)$. However, the maximum is attained {\it anglewise} as follows. Let $M={\rm sup}_{q\in D}(f_1-f_2)(q)$. Given a half plane $Q$ emanating from $\gamma_0$, let $M_Q={\rm  sup}_{q\in {Q\cap D}}(f_1-f_2)(q).$ Then $M={\rm max}_{Q}M_Q$. Hence there exists a half plane $Q_1$ emanating from $\gamma_0$ such that
$$M=\lim_{q\in \ell,\,q\rightarrow (0,0)}(f_1-f_2)(q), \,\,\,{\rm where}\,\,\ell=Q_1\cap D.$$
Then the parallel translate of $\Sigma_2$ by $M$, denoted as $\Sigma_2+M$, still contains $\gamma_0$ as $\Sigma_1$ does, lies on one side of $\Sigma_1$ (above $\Sigma_1$) and is tangent to $\Sigma_1$ at $x_3=q_1:=\lim_{q\in \ell,\,q\rightarrow(0,0)}f_1(q)$. Hence by the boundary maximum principle(boundary point lemma), $f_2+M\equiv f_1$, that is, $\Sigma_2+M=\Sigma_1$. Since $\Sigma_2+M$ spans $\Gamma+M$ and $\Sigma_1$ does $\Gamma$, $M$ must equal $0$ and thus follows the uniqueness of $\Sigma$.
\end{proof}

\section{Smyth's Theorem}
It was H.A. Schwarz \cite{Sc} who first constructed a triply periodic minimal surface in $\mathbb R^3$.  He started from  a regular tetrahedron, four edges of which forms a Jordan curve, which in turn generates a unique minimal disk. Schwarz found this surface using specific Weierstrass data. By applying his reflection principle he was able to extend the minimal disk across its linear boundary to obtain the $D$-surface. Then Schwarz introduced its conjugate surface, which he called the $P$-surface. This surface is embedded and triply periodic just like the $D$-surface. Moreover, part of it is a free boundary minimal surface in a cube.

It is interesting to notice that both $D$-surface and $P$-surface have fundamental regions which are free boundary minimal disks in two specific tetrahedra, respectively. However, this is not an accident; B. Smyth \cite{Sm} showed surprisingly that {\it any} tetrahedron contains as many as {\it three} free boundary minimal disks. In the remainder of the paper we are interested in applying Smyth's method to the periodic Plateau solutions. To do so, we shall first review Smyth's theorem in this section.

Given a tetrahedron $T$ in $\mathbb R^3$, let $F_1,F_2,F_3,F_4$ be its faces and $\nu_1,\nu_2,\nu_3,\nu_4$ the outward unit normals to the faces, respectively. Then any three of $\nu_1,\nu_2,\nu_3,\nu_4$ are linearly independent but all four of them are not. Hence there should exist positive numbers $c_1,c_2,c_3,c_4$ such that
\begin{equation}\label{lr}
c_1\nu_1+c_2\nu_2+c_3\nu_3+c_4\nu_4=0.
\end{equation}
In fact, we may assume
$$c_i={\rm Area}(F_i),\,\,\,i=1,2,3,4.$$
This is due to the divergence theorem applied on  the domain $T$ to the gradients of the harmonic functions $x_1,x_2,x_3$, the Euclidean coordinates of $\mathbb R^3$.

By (\ref{lr}) we see that there exists an oriented skew quadrilateral $\Gamma$ whose edges (as vectors) are $c_1\nu_1,c_2\nu_2,c_3\nu_3,c_4\nu_4$. The Jordan curve $\Gamma$ bounds a unique minimal disk $\Sigma$, which is the image $X(D)$ of a conformal harmonic map $X:=(x_1,x_2,x_3)$. It is well known that $x_1,x_2,x_3$ are also harmonic on $\Sigma$. Hence there exist their conjugate harmonic functions $x_1^*,x_2^*,x_3^*$ on $\Sigma$. Then $X^*:=(x_1^*,x_2^*,x_3^*)$ defines a conformal harmonic map from $D$ onto $\Sigma^*$ in $\mathbb R^3$. $X^*\circ X^{-1}:\Sigma\rightarrow\Sigma^*$ is a local isometry because of the Cauchy-Riemann equations. Therefore $\Sigma^*$ is a minimal surface locally isometric to $\Sigma$.

Let $y_i=b_i^1x_1+b_i^2x_2+b_i^3x_3$ be a linear function in $\mathbb R^3$ such that $\nabla y_i=c_i\nu_i,\,i=1,2,3,4$. Then $y_i$ is constant($=d_i$) on the face $F_i$. Suppose $u,v$ are isothermal coordinates on $\Sigma$ such that $v$ is constant along the edge $c_i\nu_i$. Then $dX(\frac{\partial}{\partial v})$ is perpendicular to the vector $c_i\nu_i$ on the edge $c_i\nu_i$. Hence $\frac{\partial y_i}{\partial v}=0$, and by Cauchy-Riemann $\frac{\partial y_i^*}{\partial u}=0$ on $c_i\nu_i$ as well, where $y_i^*:=b_i^1x_1^*+b_i^2x_2^*+b_i^3x_3^*$. Therefore $y_i^*$ is constant along the edge $c_i\nu_i$, meaning that the image $X^*(c_i\nu_i)$  lies on the plane $\{y_i^*=d_i^*\}$ for some constant $d_i^*$.

$dX(\frac{\partial}{\partial u})$ is parallel to $\nabla y_i$ along the edge $c_i\nu_i$. By Cauchy-Riemann, there exists a number $c(p)$ at $p\in c_i\nu_i$ such that
\begin{equation}\label{*}
c(p)(b_i^1,b_i^2,b_i^3)=dX(\frac{\partial}{\partial u})=dX^*(\frac{\partial}{\partial v}).
\end{equation}
Hence $dX^*(\frac{\partial}{\partial v})$ is parallel to $(b_i^1,b_i^2,b_i^3)$. Therefore $\Sigma^*$ is perpendicular to the plane $\{y_i^*=d_i^*\}$ along $X^*(c_i\nu_i)$. In conclusion, $\Sigma^*$ is  locally isometric to $\Sigma$ and is a free boundary minimal surface in a tetrahedron $T'$ which is similar to $T$. Thus $T$ contains a free boundary minimal surface which is a homothetic expansion  of $\Sigma^*$.

The skew quadrilateral $\Gamma$ depends on the order of $c_1\nu_1,c_2\nu_2,c_3\nu_3,c_4\nu_4$. Any edge of the four can be chosen to be the first in a quadrilateral. Hence there are $6=3!$ orderings of the four edges. But they can be paired off into three quadrilaterals with two opposite orientations. To be precise, for example, if the quadrilateral $\Gamma_1$ determined by four ordered vectors $(u,v,w,x)$ is reversely traversed, we get the quadrilateral $-\Gamma_1$ for the ordering $(-u,-x,-w,-v)$. Define an orthogonal map $\xi(p)=-p,\, p\in\mathbb R^3$, then $\xi(-\Gamma_1)$ is the quadrilateral determined by $(u,x,w,v)$. $\xi(-\Gamma_1)$ cannot be obtained from $\Gamma_1$ by a Euclidean motion. Even so, the two minimal disks spanning $\Gamma_1$ and $\xi(-\Gamma_1)$ are intrinsically isometric. Moreover, their conjugate surfaces are extrinsically isometric, i.e., they are identical modulo a Euclidean motion.  Therefore the six orderings of the four edges yield three geometrically distinct conjugate minimal disks which, if properly expanded, will be free boundary minimal surfaces in $T$.

\section{Free boundary minimal annulus}
By applying Smyth's theorem to a translationally periodic solution of the periodic Plateau problem we are going to construct four free boundary minimal annuli  in a tetrahedron.

\begin{theorem}\label{fb}
Let $T$ be a tetrahedron with faces $F_1,F_2,F_3,F_4$ in $\mathbb R^3$ and let $\pi_i$ be the orthogonal projection onto the plane $P_i$ containing $F_i, i=1,2,3,4$.
\begin{itemize}
\item[(a)] If every dihedral angle of $T$ is $\leq90^\circ$, there exist four free boundary minimal annuli $A_1,A_2,A_3,A_4$ in $ T$.
\item[(b)] If at least one dihedral angle of $T$ is $>90^\circ$, there exist four minimal annuli $A_1,A_2,A_3$, $A_4$ which are perpendicular to $\cup_{j=1}^4 P_j$ along $\partial A_i$. Part of $A_i$ may lie outside $T$ if a dihedral angle is nearer to $180^\circ$. {\rm (}See {\rm Figure 6, right.)} Near $\partial A_i$, however, $A_i$ lies in the same side of $P_j$ as $T$ does. Moreover, $\partial A_i$ equals $\Gamma_i^1\cup\Gamma_i^2$, where $\Gamma_i^1$ is a closed convex curve in $P_i$ and $\Gamma_i^2$ is a closed, piecewise planar curve in $P_j\cup P_k\cup P_l$ with $ \{i,j,k,l\}=\{1,2,3,4\}$.
\item[(c)] If the three dihedral angles along $\partial F_i$ are $\leq90^\circ$, then $A_i$ lies inside $ T$. $\Gamma_i^1$ is a closed convex curve in $F_i$ and $\Gamma_i^2$ is a closed, piecewise planar curve in $\partial T\setminus F_i$. {\rm (}See {\rm Figure 6, left.)}
\item[(d)] Each planar curve in $\Gamma_i^2$ is convex and is perpendicular to the lines containing the edges of $T$ at its end points.
\item[(e)] $A_i$ is an embedded graph over $\pi_i(A_i)$.
\end{itemize}
\end{theorem}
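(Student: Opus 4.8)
The plan is to run Smyth's conjugate-surface construction on a \emph{translationally periodic} Plateau solution whose quotient by the period is an annulus. Starting from $c_1\nu_1+c_2\nu_2+c_3\nu_3+c_4\nu_4=0$ with $c_i={\rm Area}(F_i)>0$, I would let $\bar\gamma_1$ be the open polygonal arc with consecutive edge-vectors $c_2\nu_2,c_3\nu_3,c_4\nu_4$, after reordering the indices $2,3,4$ so that the orthogonal projection of $\bar\gamma_1$ onto the plane through the origin perpendicular to $\nu_1$ is an embedded triangle (hence convex). Its net displacement is $-c_1\nu_1$, so with $\tau$ the translation by $-c_1\nu_1$, with $\ell$ a line parallel to $\nu_1$ whose projection lands in the interior of that triangle, with $\bar\gamma_0$ one period of $\ell$, and with $\Gamma=\ell\cup\gamma_1$, $\gamma_1:=\bigcup_{k}\tau^k(\bar\gamma_1)$, the curve $\Gamma$ is complete, disconnected, translationally periodic, has all edges perpendicular to the faces of $T$, satisfies $\ell\cap\gamma_1=\emptyset$, and $\gamma_1$ winds once around $\ell$ per period. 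After a rigid motion putting $\ell$ on the $x_3$-axis, Theorems \ref{main} and \ref{plateau} apply (a fundamental piece of $\gamma_1$ projects one-to-one onto the convex triangle): there is a unique translationally periodic simply connected minimal surface $\Sigma$ spanning $\Gamma$, which is embedded, whose fundamental region minus $\gamma_0$ is a graph over the $x_1x_2$-plane, which has $K<0$ along $\gamma_0$ with the tangent half-plane turning monotonically by $2\pi$ per period, and whose fundamental region is compact. Since $\langle\tau\rangle$ acts freely and $\Sigma$ is simply connected, $\Sigma/\langle\tau\rangle$ is a compact minimal annulus with boundary circles $C_0=\ell/\langle\tau\rangle$ (a closed geodesic) and $C_1=\gamma_1/\langle\tau\rangle$ (three edges closed up).

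The crux is to show that the conjugate of $\Sigma$ closes up across the period. Writing $X$ for the conformal harmonic parametrization of $\Sigma$ and $X^*$ for its conjugate, $X^*$ changes by the period vector $e^*=\oint_{C_0}*dX=\oint_{C_0}n\,ds$, where $n$ is the unit conormal to $C_0$ in $\Sigma$, as one traverses the core of the annulus $\Sigma/\langle\tau\rangle$ once. By Schwarz reflection across the straight line $\ell$, $\Sigma$ is invariant under the $180^\circ$ rotation $\rho$ about $\ell$; since $\rho$ fixes $C_0$ pointwise and preserves the orientation of $\Sigma$, pulling back $*dX$ by $\rho$ yields $R(*dX)$, where $R$ is the $180^\circ$ rotation of $\mathbb R^3$ about the direction $\nu_1$, whence $Re^*=e^*$ and so $e^*\parallel\nu_1$; but $n\perp\ell$ forces $\langle e^*,\nu_1\rangle=0$, so $e^*=0$. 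Therefore $X^*$ descends to $\Sigma/\langle\tau\rangle$, and $\Sigma^*:=X^*(\Sigma/\langle\tau\rangle)$ is the image of an annulus.

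Now apply Smyth's conjugate-surface argument edge by edge. Along each straight edge of $\Gamma$ in direction $\nu_i$, the linear function $y_i$ with $\nabla y_i=c_i\nu_i$ is constant in the direction of $v$, so by the Cauchy--Riemann equations $y_i^*$ is constant in the direction of $u$ there; hence $X^*$ maps that edge into a plane $\Pi_i$ perpendicular to $\nu_i$, $\Sigma^*$ meets $\Pi_i$ orthogonally, and the image arc is a planar curvature line. Thus $\partial\Sigma^*=\Gamma_1^1\cup\Gamma_1^2$ with $\Gamma_1^1=X^*(C_0)$ a closed curve in $\Pi_1$ and $\Gamma_1^2=X^*(C_1)$ three planar arcs in $\Pi_2,\Pi_3,\Pi_4$ joined at corners lying on the three edges $\Pi_j\cap\Pi_k$. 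Because $\sum c_i\nu_i=0$ with $c_i>0$, the origin lies in the interior of the convex hull of $\nu_1,\dots,\nu_4$, so the four planes $\Pi_i$ bound a tetrahedron; having the same face-normals as $T$ (hence the same dihedral angles), it is similar to $T$, and a homothety carries it to $T$ and $\Sigma^*$ to a minimal annulus $A_1$ meeting $P_1,\dots,P_4$ orthogonally along $\partial A_1$. Since conjugate surfaces share the Gauss map $g$, and by Theorem \ref{plateau}(b) $|g|\ne1$ off $\gamma_0$ (the tangent plane of $\Sigma$ being vertical exactly on $\gamma_0$), the normal of $A_1$ is non-horizontal off $\Gamma_1^1$, so $A_1\setminus\Gamma_1^1$ is a graph over its projection to $P_1$ and $A_1$ is embedded, proving (e). Along $\Gamma_1^1$ the Gauss map runs once around the equatorial circle (it turns once per period along $\gamma_0$, by Theorem \ref{plateau}(a)), so the tangent of $\Gamma_1^1$ turns by $2\pi$; being embedded, it is convex. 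Each arc of $\Gamma_1^2$ is a planar curvature line on the graph $A_1$, forcing convexity, and a Schwarz-reflection argument at the corners of $\bar\gamma_1$ shows each such arc meets the adjacent edge line $\Pi_j\cap\Pi_k$ orthogonally. This yields (b) and (d).

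Finally, $A_1$ is a graph $x_3=u$ over an annular region $\Omega\subset P_1$, vanishing on $\Gamma_1^1\subset F_1$ and equal to the heights of $\Gamma_1^2$ on the outer boundary of $\Omega$, where $\Gamma_1^2$ lies in $P_2\cup P_3\cup P_4$; when the three dihedral angles of $T$ along $\partial F_1$ are $\le90^\circ$ the faces $F_2,F_3,F_4$ do not overhang, $\Gamma_1^2$ stays within those faces, and the piecewise-linear roof formed by $P_2,P_3,P_4$ over $\Omega$ is a concave supersolution of the minimal surface equation dominating $u$ on $\partial\Omega$, so $0\le u\le{\rm roof}$ and $A_1\subset T$ — this is (c); if some such angle exceeds $90^\circ$, part of $A_1$ may escape $T$ while, near $\partial A_1$, remaining on the $T$-side of each $P_j$ by the free-boundary perpendicularity — this is (b). Letting $F_2,F_3,F_4$ in turn play the role of the distinguished face $F_1$ produces $A_2,A_3,A_4$, and if every dihedral angle of $T$ is $\le90^\circ$ all four lie inside $T$ — this is (a). I expect the main obstacle to be twofold: making rigorous the vanishing of the conjugate period $e^*$ (without it $\Sigma^*$ is an infinite translation-periodic surface rather than an annulus), and the barrier argument for (a), (c) keeping the graph $A_i$ inside $T$ using only the hypothesis on the dihedral angles along $\partial F_i$ — in particular, controlling the shape of $\Gamma_i^2$ so that its arcs stay within the faces of $T$.
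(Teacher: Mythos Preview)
Your proposal has the right overall architecture, but the step on which everything hinges --- killing the conjugate period $e^*$ --- is incorrect as written, and the paper's actual mechanism is quite different.

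You claim that ``by Schwarz reflection across the straight line $\ell$, $\Sigma$ is invariant under the $180^\circ$ rotation $\rho$ about $\ell$''. Schwarz reflection only says that the analytic continuation of $\Sigma$ across $\ell$ equals $\rho(\Sigma)$; it does \emph{not} say $\rho(\Sigma)=\Sigma$. That would require $\rho(\gamma_1)=\gamma_1$, i.e.\ that the helical polygonal curve built from $c_2\nu_2,c_3\nu_3,c_4\nu_4$ be invariant under a half-turn about an axis parallel to $\nu_1$. For a generic tetrahedron this symmetry simply does not hold, so your deduction $Re^*=e^*$ fails and $e^*$ need not vanish. You also left the position of $\ell$ unspecified (``whose projection lands in the interior of that triangle''); for a generic choice of that position the flux is nonzero.

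The paper's proof does not use any symmetry of $\Sigma$. Instead it treats the position $q$ of the axis inside the projected triangle $\Delta_4$ as a free parameter, defines the flux map $f(q)=\int_{\bar\gamma_q} n$, extends $f$ continuously to $\overline{\Delta_4}$ by letting $\Sigma_q$ degenerate to a disk when $q\in\partial\Delta_4$, observes that on $\partial\Delta_4$ the conormal points into the prism so $f(\tilde q)$ is a nonzero vector pointing toward the interior of $\Delta_4$, and then uses a winding-number/degree argument to produce an interior zero $q_4$. This existence step is the heart of the theorem and is absent from your proposal.

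Two smaller points. For (e) you argue that $|g|\neq 1$ off $\gamma_0$ gives a graph; that only yields a local immersion with nonvertical normal, not global injectivity of the projection. The paper obtains the global graph property from Krust's theorem, using that the fundamental region of $\Sigma$ (minus $\gamma_0$) is already a graph. For (d), ``planar curvature line on a graph $\Rightarrow$ convex'' is not a valid implication; the paper proves convexity of each arc $X^*(L)$ by showing $g'\neq 0$ along the interior of the corresponding straight edge $L\subset\bar\gamma_1$, via an argument that the half-plane through $L$ meets $\Sigma$ in a single curve (so the tangent half-plane turns monotonically). Your convexity argument for $\Gamma_i^1$, by contrast, is essentially the paper's (monotone rotation of the conormal along $\gamma_0$, equivalently $g'\neq 0$ there from Theorem~\ref{plateau}(a)). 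Finally, for (c) the paper does not use a roof supersolution; it shows directly (Lemma~\ref{convexity}) that when the dihedral angles along $\partial F_i$ are $\le 90^\circ$ the function $x_3$ is monotone on $\gamma_1$, hence $\Sigma$ has no horizontal tangent plane, hence $x_3^*\ge 0$ on $\Sigma^*$.
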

\begin{center}
\includegraphics[width=5.5in]{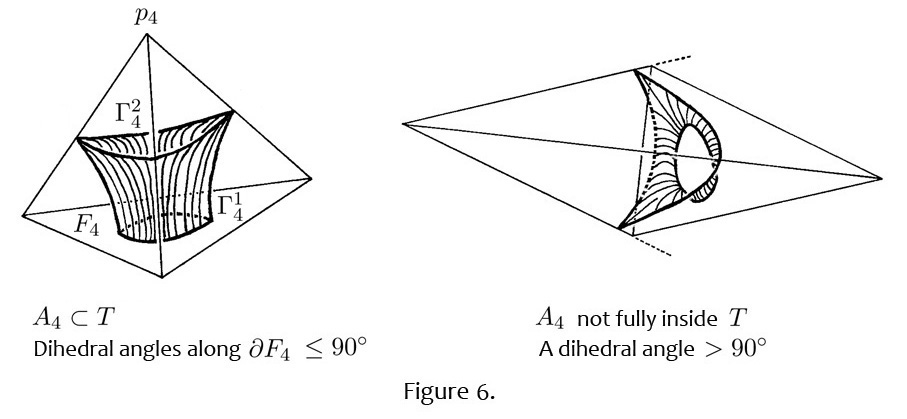}\\
\end{center}
\begin{proof}
As in the preceding section, $\nu_i$ denotes the outward unit normal to $F_i$. Again, there are positive constants $c_i={\rm Area}(F_i)$ such that $c_1\nu_1+c_2\nu_2+c_3\nu_3+c_4\nu_4=0$. Assume that $\nu_4$ is parallel to the $x_3$-axis so that $F_4$ is contained in the $x_1x_2$-plane. Denote the $x_1x_2$-plane by $P_4$ and recall that $\pi_4$ denotes the orthogonal projection onto  $P_4$. Since
$$\pi_4(c_1\nu_1)+\pi_4(c_2\nu_2)+\pi_4(c_3\nu_3)=0,$$
$\pi_4(c_1\nu_1),\pi_4(c_2\nu_2),\pi_4(c_3\nu_3)$ determine the boundary of a triangle $\Delta_4\subset P_4$, that is, $\pi_4(c_i\nu_i)$ is the $i$th oriented edge of $\Delta_4$, $i=1,2,3$. $\pi_4(c_i\nu_i)$ is perpendicular to the boundary edge $F_i\cap F_4$ of $F_4$. Also $\pi_4(c_i\nu_i)$ is perpendicular to the corresponding edge of $J(\Delta_4)$, where $J$ denotes the counterclockwise $90^\circ$ rotation on $P_4$. Therefore $\Delta_4$ is similar to $F_4$.

Choose a point $q$ from the interior $\check{\Delta}_4$ of $\Delta_4$ and let $\bar{\gamma}_q$ be the vertical line segment starting from $q$ and corresponding to (i.e., having the same length and direction as) $-c_4\nu_4$. Let $\bar{\gamma}_1$ be a connected piecewise linear open curve starting from a vertex of $\Delta_4$ that is the starting point of the oriented edge $\pi_4(c_1\nu_1)$ such that $\bar{\gamma}_1$ is the union of the three oriented line segments corresponding to the ordered vectors $c_1\nu_1,c_2\nu_2,c_3\nu_3$. Then $\pi_4(\bar{\gamma}_1)=\partial\Delta_4$. Also the endpoints of $\bar{\gamma}_1$ and $\bar{\gamma}_q$ are in ${\Delta}_4$ and in its parallel translate. One can extend $\bar{\gamma}_q\cup\bar{\gamma}_1$ into a complete translationally periodic curve $\Gamma_q:=\gamma_q\cup\gamma_1$ such that $\bar{\gamma}_q\cup\bar{\gamma}_1,\bar{\gamma}_q,\bar{\gamma}_1$ become  fundamental pieces of $\Gamma_q,\gamma_q,\gamma_1$, respectively. By Theorem \ref{main} and Theorem \ref{plateau} there uniquely exists a simply connected minimal surface $\Sigma_q$ spanning $\Gamma_q$. $\Sigma_q$ has the same translational periodicity as $\Gamma_q$ does. (See Figure 7.)

Let $\Sigma_q^*$ be the conjugate minimal surface of $\Sigma_q$ and denote by $Y_q^*=X_q^*\circ X_q^{-1}$ the local isometry from $\Sigma_q$ to $\Sigma_q^*$. By Smyth's arguments in the preceding section we see that the image $Y_q^*(c_i\nu_i)$ of the edge $c_i\nu_i$ is in a plane parallel to the face $F_i$. More precisely, $Y_q^*(c_i\nu_i)$ lies in the plane $\{y_i^*=d_i^*\}$, where $\nabla y_i^*=c_i\nu_i$.  However, $Y_q^*(\bar{\gamma}_q)$ is not closed in general because $Y_q^*$ may have nonzero period along $\bar{\gamma}_q$. But
\begin{center}
\includegraphics[width=5.5in]{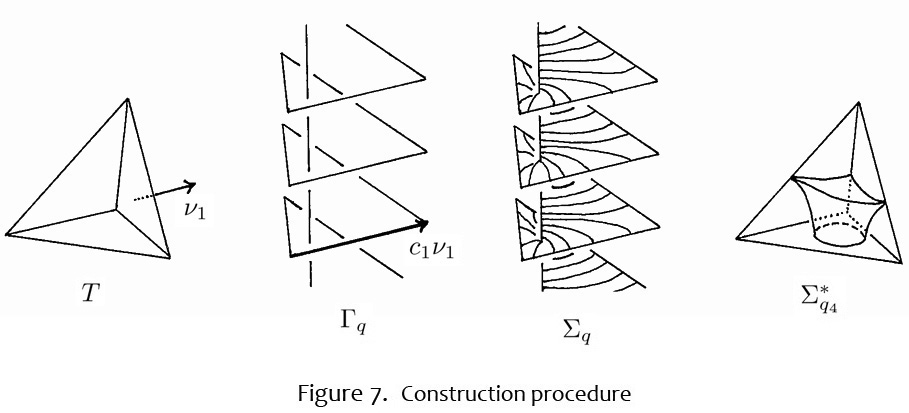}\\
\end{center}
note that by Cauchy-Riemann the period of $Y_q^*$ along $Y_q^*(\bar{\gamma}_q)$ equals the flux of $\Sigma_q$ along $\bar{\gamma}_q$. Therefore in order to make $\Sigma_q^*$ a well-defined compact minimal annulus, we need to find a suitable point $q$ in $\check{\Delta}_4$ for which the flux of $\Sigma_q$ along $\bar{\gamma}_q$ becomes the zero vector. Note here that the flux of $\Sigma_q$ along $\bar{\gamma}_1$ vanishes if and only if the flux of $\Sigma_q$ along $\bar{\gamma}_q$ does.

Let $n(p)$ be the inward unit conormal  to $\bar{\gamma}_q$ on $\Sigma_q$ at $p\in\bar{\gamma}_q$ and define
$$f(q)=\int_{p\in\bar{\gamma}_q}n(p).$$
Then $f(q)$ is the flux of $\Sigma_q$ along $\bar{\gamma}_q$ and $f$ is a map from the interior $\check{\Delta}_4$ to the set $N$ of vectors parallel to the plane $P_4$. $f$ is a smooth map and can be extended continuously to the closed triangle $\Delta_4$. Let ${\Delta}_4\times\mathbb R$ be the vertical prism over ${\Delta}_4$. Obviously $\Sigma_q$ lies inside ${\Delta}_4\times\mathbb R$. Since $\bar{\gamma}_1$ winds around $\bar{\gamma}_q$ once, so does $n(p)$ as $p$ moves along $\bar{\gamma}_q$. But as $q$ approaches a point $\tilde{q}\in\partial\Delta_4$, $\Gamma_q$ converges to a complete translationally periodic curve ${\Gamma}_{\tilde{q}}:={\gamma}_{\tilde{q}}\cup{\gamma}_1$ of which $\bar{\gamma}_{\tilde{q}}\cup\bar{\gamma}_1$ is a fundamental piece. Let $\tau$ be the translation defined by $\tau(\bar{p})=\bar{p}-c_4\nu_4,\, \bar{p}\in\mathbb R^3$. Since $\bar{\gamma}_{\tilde{q}}$ intersects $\bar{\gamma}_1$, ${\Gamma}_{\tilde{q}}$ is a periodic union of Jordan curves, or more precisely, ${\Gamma}_{\tilde{q}}=\cup_n\tau^n({{\gamma}}_{1\tilde{q}})$, where ${\gamma}_{1\tilde{q}}$ is a Jordan curve  which is a subset of $(\bar{\gamma}_{\tilde{q}}\cup\bar{\gamma}_1)\cup\tau(\bar{\gamma}_{\tilde{q}}\cup\bar{\gamma}_1)$. ${\gamma}_{1\tilde{q}}$ consists of five (or four if $\bar{\gamma}_{\tilde{q}}$ passes through a vertex of $\bar{\gamma}_1$) line segments. It is known that the total curvature of ${\gamma}_{1\tilde{q}}$ equals the length of its tangent indicatrix $T_{1\tilde{q}}$. $T_{1\tilde{q}}$ is comprised of (i) a geodesic triangle and a geodesic with multiplicity 2 in case ${\gamma}_{1\tilde{q}}$ consists of five line segments or (ii) four geodesics connecting the four points in $\mathbb S^2$ that correspond to $\nu_1,\nu_2,\nu_3,\nu_4$.  Since the length of a geodesic triangle is less than $2\pi$ and the length of a geodesic is less than $\pi$, the total length of $T_{1\tilde{q}}$ is smaller than $4\pi$ in either case. Thus by \cite{N} there exists a unique minimal disk spanning ${\gamma}_{1\tilde{q}}$ . As a matter of fact, we can easily extend the proof of Theorem \ref{plateau} (c) to the limiting case where $\bar{\gamma}_1$ intersects $\bar{\gamma}_0$. So we can see that ${\gamma}_{1\tilde{q}}$ bounds a unique minimal surface $\hat{\Sigma}_{\tilde{q}}\subset\Delta_4\times\mathbb R$ regardless of its topology. As $q\rightarrow\tilde{q}\in\partial\Delta_4$, a fundamental region of $\Sigma_q$ converges to $\hat{\Sigma}_{\tilde{q}}$.  Hence, by continuity of the extended map $f:\Delta_4\rightarrow N$, $f(q)$ converges to $f(\tilde{q})=\int_{p\in{\bar{\gamma}}_{\tilde{q}}}n(p)$ which is the flux of $\hat{\Sigma}_{\tilde{q}}$ along ${\bar{\gamma}}_{\tilde{q}}\subset\partial\Delta_4\times\mathbb R$. Therefore, as $n(p)$ points into the interior of $\Delta_4$ at any $p\in{\bar{\gamma}_{\tilde{q}}}$, $f(\tilde{q})$ is a nonzero horizontal vector pointing toward the interior of $\Delta_4$.

Now we are ready to show that there is a point $q$ in the interior $\check{\Delta}_4$ at which the flux $f(q)$ vanishes. Suppose $f(q)\neq0$ for all $q\in\check{\Delta}_4$ and define a map $\tilde{f}:{\Delta}_4\rightarrow \mathbb S^1$ by
$$\tilde{f}(q)=\frac{f(q)}{|f(q)|}.$$
Then $\tilde{f}$ is continuous and $\tilde{f}\big|_{\partial\Delta_4}$ has winding number $1$ because the nonzero horizontal vector $f(\tilde{q})$ points toward the interior $\check{\Delta}_4$ at any $\tilde{q}\in\partial\Delta_4$. But this is a contradiction since the induced homomorphism $\tilde{f}_*:\pi_1(\Delta_4)\rightarrow\pi_1(\mathbb S^1)$ must then be surjective. Therefore there should exist $q_4\in\check{\Delta}_4$, and a minimal surface $\Sigma_{q_4}$ which has zero flux $f(q_4)=0$ along $\bar{\gamma}_{q_4}$. Thus the conjugate surface $\Sigma_{q_4}^*$ is a well-defined minimal annulus. (See Figure 7.)

It remains to show that a homothetic expansion of $\Sigma_{q_4}^*$ is in $T$ and perpendicular to $\partial T$ along its boundary. According to the arguments of Smyth's theorem, there exist constants $d_1^*,d_2^*,d_3^*,d_4^*$ such that the curve $Y^*_{q_4}(c_i\nu_i)$ is in the plane $\{y_i^*=d_i^*\}$ and $\Sigma_{q_4}^*$ is perpendicular to that  plane along $Y^*_{q_4}(c_i\nu_i)$. Moreover, the outward unit conormal to $Y_{q_4}^*(c_i\nu_i)$ on $\Sigma_{q_4}^*$ is $\nu_i$ and hence near  $Y_{q_4}^*(c_i\nu_i)$, $\Sigma_{q_4}^*$ lies in the same side of the plane $\{y_i^*=d_i^*\}$ as $T'$ does. Remember that the four planes  $\cup_{i=1}^4\{y_i=d_i\}$ enclose the tetrahedron $T$ and $\cup_{i=1}^4\{y_i^*=d_i^*\}$ enclose the tetrahedron $T'$. Since $y_i=b_i^1x_1+b_i^2x_2+b_i^3x_3$ and $y_i^*=b_i^1x_1^*+b_i^2x_2^*+b_i^3x_3^*$, $T'$ is similar to $T$. As $\nu_4$ is assumed to be parallel to the $x_3$ axis, $y_4^*=b_4^*x_3^*$. 

Obviously a homothetic expansion of $\Sigma^*_{q_4}$ will give a minimal annulus $A_4$ which is perpendicular to $\cup_{i=1}^4\{y_i=d_i\}$ along $\partial A_4$. Working with a new plane $P_j$ containing $F_j, j=1,2,3,$ instead of $F_4$ and using the triangles $\Delta_j \subset P_j$, obtained from the relation for the projection $\pi_j$ into $P_j$:
$$\left(\sum_{i=1}^4\pi(c_i\nu_i)\right)-\pi_j(c_j\nu_j)=0,\,\,\,j=1,2\,\,{\rm or}\,\,3,$$
one can similarly find minimal annuli $A_1,A_2,A_3$ which are homothetic expansions of $\Sigma_{q_j}^*$ for some $q_j\in\Delta_j,j=1,2,3$. This proves (b) except for the convexity of the closed curve.

Let's denote by $F_j'$ the face of $T'$ which is similar to the face $F_j$ of $T$, $j=1,2,3,4$. Is it true that $\partial\Sigma_{q_j}^*\subset\partial T'$? Here we have to be careful because $Y^*_{q_j}(\bar{\gamma}_{q_j})$ and $Y^*_{q_j}(\bar{\gamma}_{1})$ are {\it disconnected}. (Notice that $\partial\Sigma^*$ is connected in Smyth's case.) Consequently, for $j=4$, $Y^*_{q_4}(\bar{\gamma}_{1})$ is not necessarily a subset of $\partial T'\setminus\{y_4^*=d_4^*\}$ and it may intersect the plane $\{y_4^*=d_4^*\}(=\{x_3^*=0\})$ as in Figure 6, right. To get some information about the location of $\partial\Sigma_{q_4}^*$, let's first assume that (d) and (e) are true. Since near  $Y_{q_4}^*(c_i\nu_i), i=1,2,3,$ $\Sigma_{q_4}^*$ lies in the same side of the plane $\{y_i^*=d_i^*\}$ as $T'$ does and since $Y_{q_4}^*(c_i\nu_i)$ are convex and are perpendicular on their endpoints to the three lines containing the edges $F_1'\cap F_2'$, $F_2'\cap F_3'$, $F_3'\cap F_1'$, respectively, one can conclude that {\it (i)} $Y_{q_4}^*(\bar{\gamma}_1)$ lies in the tangent cone $TC_{p_4'}(\partial T')$ of $\partial T'$ at $p_4'$, the vertex of $T'$ opposite $F_4'$. As $\Sigma_{q_4}^*$ is a graph over $\pi_4(\Sigma_{q_4}^*)$, {\it (ii)} $Y_{q_4}^*(\bar{\gamma}_{q_4})$ is surrounded by $\pi_4(Y_{q_4}^*(\bar{\gamma}_1))$ in the plane $\{y_4^*=d_4^*\}$.

Now let's prove a lemma which is more general than (c). If the dihedral angles along $\partial F_4$ are $\leq90^\circ$, the unit normals $\nu_1,\nu_2,\nu_3$ are pointing upward and  $\bar{\gamma}_1$ goes upward. So one can consider the following generalization.
\begin{lemma}\label{convexity}
Let $\Gamma=\gamma_0\cup\gamma_1$ be a translationally periodic curve and $\gamma_0$ the $x_3$-axis. Assume that $\Sigma_\Gamma$ is a translationally periodic Plateau solution spanning $\Gamma$. If $x_3$ is a nondecreasing function on $\gamma_1$, then the boundary component of $\Sigma_\Gamma^*$ corresponding to $\gamma_0$ is in the $x_1^*x_2^*$-plane and  $\Sigma_\Gamma^*$ is on and above the $x_1^*x_2^*$-plane.
\end{lemma}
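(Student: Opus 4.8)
The plan is to pass to the conformal harmonic parametrization and read everything off from the Cauchy--Riemann equations. Let $X=(x_1,x_2,x_3)\colon I_{\bar a}=[0,\bar a]\times\mathbb R\to\mathbb R^3$ be the periodic conformal harmonic map producing $\Sigma_\Gamma$ from Theorem \ref{main}, normalized so that $X(\{0\}\times\mathbb R)=\gamma_0$, $X(\{\bar a\}\times\mathbb R)=\gamma_1$ and $X(u,v+\beta)=\tau(X(u,v))$ with $\tau$ the translation by $(0,0,e)$, $e>0$. Let $X^*=(x_1^*,x_2^*,x_3^*)$ be the conjugate map, taken with the convention $\partial_u X^*=\partial_v X$, $\partial_v X^*=-\partial_u X$, so that $X^*$ is again periodic conformal harmonic and $\Sigma_\Gamma^*=X^*(I_{\bar a})$. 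Set $g\colon I_{\bar a}\to\mathbb R$, $g(u,v)=x_3^*(u,v)$ (the third component of $X^*$), so that ``$\Sigma_\Gamma^*$ on and above the $x_1^*x_2^*$-plane'' means $g\ge0$.

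First I would locate the boundary curve $\gamma_0^*:=X^*(\{0\}\times\mathbb R)$. Since $X$ maps $\{u=0\}$ onto the vertical line $\gamma_0$, we have $\partial_v X=(0,0,\ast)$ on $\{u=0\}$, and conformality forces $\partial_u X$ to be orthogonal to $\partial_v X$ of equal length, hence horizontal; thus $\partial_u x_3\equiv0$ on $\{u=0\}$. By $\partial_v x_3^*=-\partial_u x_3$ this gives $\partial_v x_3^*\equiv0$ on $\{u=0\}$, so $x_3^*$ is constant along $\gamma_0^*$; translating $\Sigma_\Gamma^*$ vertically (a conjugate surface is defined only up to translation) we may take this constant to be $0$, i.e.\ $\gamma_0^*$ lies in the $x_1^*x_2^*$-plane. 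The same identity $\partial_u x_3\equiv0$ on $\{u=0\}$ makes the conjugate period vector $V^*:=X^*(u,v+\beta)-X^*(u,v)$ horizontal: $V^*$ is a constant vector because $\partial_u V^*=\partial_v X(u,v+\beta)-\partial_v X(u,v)=0$ and $\partial_v V^*=-\partial_u X(u,v+\beta)+\partial_u X(u,v)=0$ (using $X(u,v+\beta)=X(u,v)+(0,0,e)$), and its third component is $\int_0^\beta\partial_v x_3^*(0,v)\,dv=-\int_0^\beta\partial_u x_3(0,v)\,dv=0$. Hence $\Sigma_\Gamma^*$ is invariant under the horizontal translation $\tau^*$ by $V^*$, $g$ is $\beta$-periodic in $v$, and it descends to a harmonic function $\tilde g$ on the compact cylinder $C^*=\Sigma_\Gamma^*/\langle\tau^*\rangle$ whose two boundary circles are the images of $\gamma_0$ and $\gamma_1$.

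It remains to prove $\tilde g\ge0$; this is the heart of the matter and where the hypothesis enters. On the boundary circle coming from $\gamma_0$ we already have $\tilde g\equiv0$. On the circle coming from $\gamma_1=\{u=\bar a\}$, the outward conormal of $C^*$ points in the $+\partial_u$ direction, and by $\partial_u x_3^*=\partial_v x_3$ the outward conormal derivative $\partial\tilde g/\partial\nu_{\mathrm{out}}$ has the same sign as $\partial_v x_3$ at $u=\bar a$; but $v\mapsto x_3(\bar a,v)$ is $x_3$ read along $\gamma_1$ in the forward direction forced by $e>0$, which is nondecreasing by assumption, so $\partial\tilde g/\partial\nu_{\mathrm{out}}\ge0$ on that circle. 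I would then test $\Delta\tilde g=0$ against the negative part $\tilde g^-=\min(\tilde g,0)$:
\[
0=\int_{C^*}\tilde g^-\,\Delta\tilde g\;=\;-\int_{C^*}|\nabla\tilde g^-|^2\;+\;\int_{\partial C^*}\tilde g^-\,\frac{\partial\tilde g}{\partial\nu_{\mathrm{out}}},
\]
where the boundary term vanishes on the $\gamma_0$-circle (there $\tilde g^-=0$) and is $\le0$ on the $\gamma_1$-circle (there $\tilde g^-\le0$ and $\partial\tilde g/\partial\nu_{\mathrm{out}}\ge0$). Hence $\nabla\tilde g^-\equiv0$, so $\tilde g^-$ is constant, and since it is $0$ on the $\gamma_0$-circle it vanishes identically; thus $\tilde g\ge0$, i.e.\ $\Sigma_\Gamma^*$ lies on and above the $x_1^*x_2^*$-plane.

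The two points to handle with care are boundary regularity and orientation. For the integration by parts one only needs $\tilde g\in C^0(C^*)\cap W^{1,2}(C^*)$: $\Sigma_\Gamma$ is area minimizing so it has no interior branch point, none occurs on $\gamma_0$ by the argument in the proof of Theorem \ref{plateau}, and any corners or exceptional points of $\gamma_1$ form a set of zero capacity and do not affect the identity; alternatively one can argue by the maximum principle, a negative minimum of $\tilde g$ being impossible in the interior, on the $\gamma_0$-circle, and — by the Hopf boundary point lemma, contradicting $\partial\tilde g/\partial\nu_{\mathrm{out}}\ge0$ — at a smooth point of the $\gamma_1$-circle. The orientation bookkeeping is the genuinely delicate step: the conjugation convention ($\partial_uX^*=\partial_vX$) must be coordinated with the forward direction along $\gamma_1$ so that ``$x_3$ nondecreasing on $\gamma_1$'' delivers $\partial\tilde g/\partial\nu_{\mathrm{out}}\ge0$ — hence the surface above the plane — rather than the opposite sign.
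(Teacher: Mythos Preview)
Your argument is correct, and it takes a genuinely different route from the paper's.

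The paper argues geometrically: it first shows that $\Sigma_\Gamma$ has no horizontal tangent plane at any interior point. Indeed, if $T_p\Sigma_\Gamma=\{x_3=h\}$ then the level set $\{x_3=h\}\cap\Sigma_\Gamma$ consists of at least four arcs through $p$; since $\{x_3=h\}$ meets $\gamma_0$ in one point and---by the monotonicity hypothesis on $\gamma_1$---meets $\gamma_1$ in one point (or an interval), two of these arcs must close off a subdomain of $\Sigma_\Gamma$ with boundary in $\{x_3=h\}$, contradicting the maximum principle. With $\nabla x_3\neq0$ throughout the interior, each horizontal section $\{x_3=h\}\cap\Sigma_\Gamma$ is a single arc from $\gamma_0$ to $\gamma_1$, and the Cauchy--Riemann relation $|\nabla x_3^*|=|\nabla x_3|$ makes $x_3^*$ strictly monotone along that arc; since $x_3^*=0$ on $\gamma_0$, one gets $x_3^*\ge0$.

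Your route is a straight PDE argument on the quotient cylinder: $x_3^*$ is harmonic, vanishes on the $\gamma_0$--circle, and has $\partial_\nu x_3^*\ge0$ on the $\gamma_1$--circle by the hypothesis and the Cauchy--Riemann equations, so testing against $(x_3^*)^-$ forces $x_3^*\ge0$. This is tidier analytically and avoids the level-curve topology altogether. The paper's approach, on the other hand, yields the extra pointwise information that $\Sigma_\Gamma$ is nowhere horizontal---the kind of transversality fact the paper exploits repeatedly. Both proofs hide the same orientation check you flag; the paper simply asserts that $x_3^*$ is ``increasing'' along each horizontal section without spelling out the sign.
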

\begin{proof}
$\Sigma_\Gamma$ has no horizontal tangent plane $T_p\Sigma_\Gamma$ at any interior point $p\in\Sigma_\Gamma$. This can be verified as follows. Every horizontal plane $\{x_3=h\}$ intersects $\Gamma$ either at two points only or at infinitely many points (the second case occurs when $\{x_3=h\}\cap\gamma_1$ is a curve of positive length). If $T_p\Sigma_\Gamma=\{x_3=h\}$, then $\{x_3=h\}\cap\Sigma_\Gamma$ is the set of at least four curves emanating from $p$. But then three of them intersect $\gamma_1$ and hence there exists a domain $D\subset\Sigma_\Gamma$ with $\partial D\subset\{x_3=h\}$, which contradicts the maximum principle. Hence $\{x_3=h\}$ is transversal to $\Sigma_\Gamma$ for every $h$ and therefore $x_3^*$ is an increasing function on every horizontal section $\{x_3=h\}\cap\Sigma_\Gamma$. Since $x_3^*=0$ on $\gamma_0$, $x_3^*$ must be nonnegative on $\Sigma_\Gamma^*$.
\end{proof}

If the dihedral angles along $\partial F_4$ are $\leq90^\circ$, then by the above lemma $Y_{q_4}^*(\bar{\gamma}_1)\subset \partial T'\setminus F_4'$. By (e), which will be proved independently, $Y_{q_4}^*(\bar{\gamma}_{q_4})$ is surrounded by $\pi_4(Y_{q_4}^*(\bar{\gamma}_1))$ and hence  $Y_{q_4}^*(\bar{\gamma}_{q_4})$ lies inside $F_4'$. This proves (c) (except for convexity) and (a) as well.

We now derive the convexity of $\partial\Sigma_{q_4}^*$ as follows. Henceforth our proof will be independent of (a), (b), (c). It should be mentioned that $\Sigma_{q_4}^*$ has been constructed independently of (d) and (e). Let $Q$ be a vertical half plane emanating from $\bar{\gamma}_{q_4}$, that is, $\partial Q\supset\bar{\gamma}_{q_4}$. Then $Q\cap\bar{\gamma}_1$ is a single point unless $Q$ contains the two boundary points of $\bar{\gamma}_1$. Let $q$ be a point of $\bar{\gamma}_{q_4}$ which is the end point of $Q\cap(\Sigma_{q_4}\setminus\bar{\gamma}_{q_4})$. Here we claim that in a neighborhood $U$ of $q$, $C:=U\cap Q\cap(\Sigma_{q_4}\setminus\bar{\gamma}_{q_4})$ is a  single curve emanating from $q$. If not, $U\cap Q\cap(\Sigma_{q_4}\setminus\bar{\gamma}_{q_4})$ is the union of at least two curves $C_1,C_2,\ldots$ emanating from $q$. These curves can be extended all the way up to $\bar{\gamma}_{q_4}\cup\bar{\gamma}_1$. In case $Q\cap\partial\bar{\gamma}_1=\emptyset$, $Q\cap\bar{\gamma}_1$ is a single point,  then only one of $C_1,C_2,\ldots$, say $C_1$, can reach the point $Q\cap\bar{\gamma}_1$ and  $C_2$ can only reach $\bar{\gamma}_{q_4}$. Since $\Sigma_{q_4}$ is simply connected,  $C_2$ and $\bar{\gamma}_{q_4}$ bound a domain $D\subset\Sigma_{q_4}$ with $\partial D\subset Q$. This contradicts the maximum principle. In case $Q$ intersects $\bar{\gamma}_1$ at its boundary points $p_1,p_2$, there exist two curves, say $C_1,C_2\subset Q\cap \Sigma_{q_4}$ emanating from $q$, such that $p_1\in C_1$ and $p_2\in C_2$. Remember that $\bar{\gamma}_{q_4}\cup\bar{\gamma}_1$ is a fundamental piece of $\Gamma_{q_4}$ which is translationally periodic under the vertical translation $\tau$ by $-c_4\nu_4$. Hence $\tau(p_1)=p_2$ and therefore the two distinct curves $\tau(C_1),C_2\subset Q\cap\Sigma_{q_4}$ emanate from $p_2$. But this is not possible since in a neighborhood of $p_2$, $Q\cap\Sigma_{q_4}$ is a single curve emanating from $p_2$. Hence the claim follows.

Note that $\log g=i\,{\rm arg}\,g$ on the straight line $\gamma_{q_4}$ containing $\bar{\gamma}_{q_4}$ because $|g|\equiv1$ there. If $(d/dx_3){\rm arg}\,g=0$ at a point $q\in\gamma_{q_4}$ ($x_3$: the parameter of ${\gamma}_{q_4}$), then for the vertical half plane $Q$ tangent to $\Sigma_{q_4}$ at $q$, $Q\cap(\Sigma_{q_4}\setminus{\gamma}_{q_4})$ will be the union of at least two curves emanating from $q$, contradicting the claim. Hence $g'\neq0$  on ${\gamma}_{q_4}$. Therefore $g'\neq0$ on $\Sigma_{q_4}^*\cap \{y_4^*=d_4^*\}=Y_{q_4}^*(\gamma_{q_4})$ as well and so $\Sigma_{q_4}^*\cap \{y_4^*=d_4^*\}$ is convex. Similarly, let $Q_j$ be a half plane emanating from the line segment $L$ in $\bar{\gamma}_1$ corresponding to $c_j\nu_j$, $j=1,2,3$. Being nonvertical, $Q_j$ intersects ${\gamma}_{q_4}$ only at one point. Hence $Q_j\cap(\Sigma_{q_4}\setminus L)$ is a single curve joining a point $p\in L$ to $Q_j\cap{\gamma}_{q_4}$ and $p$ is a tangent point of $Q_j$ and $\Sigma_{q_4}$. If we rotate $\Sigma_{q_4}$ in such a way that $|g|\equiv1$ on $L$, we can conclude $g'(p)\neq0$ in the same way as above, as long as $p$ is an interior point of $L$. On the other hand, $g'=0$ at the boundary of $L$ because the interior angle at the boundary of $L$ is $<\pi$. Note that any interior point of $L$ can be a tangent point of $Q_j$ and $\Sigma_{q_4}$ for some $Q_j$ emanating from $L$ and that $Q_j$ intersects $\gamma_{q_4}$ at one point only. Therefore $g'\neq0$ in the interior of $L\subset\Sigma_{q_4}$ and hence $g'\neq0$ in the interior of $\Sigma_{q_4}^*\cap \{y_j^*=d_j^*\}=Y^*(L)$. Thus $\Sigma_{q_4}^*\cap \{y_j^*=d_j^*\}$ is convex, $j=1,2,3$.
Since $\Sigma_{q_4}^*$ is perpendicular to $\{y_i^*=d_i^*\}$ and to $\{y_j^*=d_j^*\}$ at $p=\Sigma_{q_4}^*\cap \{y_i^*=d_i^*\}\cap \{y_j^*=d_j^*\}, 1\leq i\neq j\leq3$, so is $\partial \Sigma_{q_4}^*$ to the edge $\{y_i^*=d_i^*\}\cap \{y_j^*=d_j^*\}$ at $p$. This proves (d).

Remark that $Q\cap\bar{\gamma}_1$ being a single point is the key to the convexity of $\Sigma_{q_4}^*\cap \{y_4^*=d_4^*\}$. Therefore one can easily prove the following generalization which is dual to Lemma \ref{convexity}.

\begin{lemma}
Let $\Gamma=\gamma_0\cup\gamma_1$ be a translationally periodic curve and $\gamma_0$ the $x_3$-axis. Assume that $\Sigma_\Gamma$ is a translationally periodic Plateau solution spanning $\Gamma$ and that its conjugate surface $\Sigma_\Gamma^*$ is a well-defined minimal annulus. If a fundamental piece $\bar{\gamma}_1$ of $\gamma_1$ has a one-to-one projection into the $x_1x_2$-plane $\{x_3=0\}$, then the closed curve $\Sigma_\Gamma^*\cap\{x_3^*=0\}$ is convex.
\end{lemma}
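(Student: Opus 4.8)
The plan is to transcribe the convexity argument of part~(d) of Theorem~\ref{fb} into this abstract setting, with $\gamma_0$ now in the role played there by the boundary piece $\bar\gamma_{q_4}$. As the remark preceding the statement indicates, the one essential ingredient---supplied by the projection hypothesis---is that every vertical half plane $Q$ emanating from $\gamma_0$ meets a fundamental piece $\bar\gamma_1$ in a single point; granting this, the maximum principle together with the Weierstrass formula (\ref{K}) for $K$ forces $g'\neq0$ along the boundary component $\Sigma_\Gamma^*\cap\{x_3^*=0\}$, which is exactly the analytic condition for convexity.

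First I would set the stage with two standard facts about the conjugation $Y^*=X^*\circ X^{-1}$. Because $\gamma_0$ is a straight line on $\Sigma_\Gamma$, its image $Y^*(\gamma_0)$ is a planar geodesic of $\Sigma_\Gamma^*$ lying in a plane perpendicular to $\gamma_0$; after the normalization used in Lemma~\ref{convexity} this plane is $\{x_3^*=0\}$, $\Sigma_\Gamma^*$ meets it orthogonally, and (since $\Sigma_\Gamma^*$ is a well-defined annulus, i.e.\ the flux along $\gamma_0$ vanishes) $Y^*(\gamma_0)=\Sigma_\Gamma^*\cap\{x_3^*=0\}$ is a closed curve. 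Also, conjugate minimal surfaces carry the same Gauss map $g$, so it suffices to prove $g'\neq0$ along $\gamma_0$ in $\Sigma_\Gamma$. Indeed, along $\gamma_0$ the tangent planes are vertical, hence $|g|\equiv1$ and $g=e^{i\arg g}$ there; since $\Sigma_\Gamma^*$ meets $\{x_3^*=0\}$ orthogonally, the unit tangent of the planar curve $Y^*(\gamma_0)$ is the (horizontal) Gauss normal rotated by $90^\circ$, so its turning rate is a positive multiple of $|g'|$. Because $\arg g$ increases by exactly $2\pi$ over one period of $\gamma_0$ (this is the function $\theta(x_3)$ from the proof of Theorem~\ref{plateau}(a)), the condition $g'\neq0$ says that $Y^*(\gamma_0)$ is a closed planar curve whose tangent turns monotonically through $2\pi$, i.e.\ a convex curve.

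The heart of the matter is the claim, exactly parallel to the one established for~(d): for every vertical half plane $Q$ with $\partial Q\supset\gamma_0$ and every $q\in\gamma_0$ that is an endpoint of a component of $Q\cap(\Sigma_\Gamma\setminus\gamma_0)$, some neighborhood of $q$ meets $Q\cap(\Sigma_\Gamma\setminus\gamma_0)$ in a single arc. I would prove it by contradiction. If two arcs $C_1,C_2$ issued from $q$, continue them until they reach the fundamental boundary piece $\bar\gamma_0\cup\bar\gamma_1$. If $Q$ avoids the two endpoints of $\bar\gamma_1$, then $Q\cap\bar\gamma_1$ is a single point, so at most one of $C_1,C_2$---say $C_1$---reaches $\bar\gamma_1$; the other, $C_2$, cannot be unbounded (a fundamental region of $\Sigma_\Gamma$ is bounded), so it returns to $\gamma_0$ and, together with an arc of $\gamma_0$, bounds a domain $D\subset\Sigma_\Gamma$ with $\partial D\subset Q$---contradicting the maximum principle, since some interior point of $D$ realizes the maximal distance to $Q$. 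If instead $Q$ passes through the two endpoints $p_1,p_2$ of $\bar\gamma_1$, then $\tau(p_1)=p_2$, and the two distinct arcs $\tau(C_1),C_2\subset Q\cap\Sigma_\Gamma$ issue from the interior point $p_2$, which is impossible because $Q\cap\Sigma_\Gamma$ is a single arc near $p_2$. This proves the claim.

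Finally, if $g'(q)=0$ at some $q\in\gamma_0$, then by the equivalences (\ref{equivalence}) and the formula (\ref{K}) the tangent half plane $Q$ of $\Sigma_\Gamma$ at $q$ divides $\Sigma_\Gamma$ near $q$ into $m\geq3$ regions, so $Q\cap(\Sigma_\Gamma\setminus\gamma_0)$ contains at least two arcs issuing from $q$, contradicting the claim. Hence $g'\neq0$ on $\gamma_0$, therefore on $Y^*(\gamma_0)=\Sigma_\Gamma^*\cap\{x_3^*=0\}$, and by the second paragraph this curve is convex. The step I expect to be the main obstacle is the global part of the claim---guaranteeing that a ``returning'' level arc actually closes off a disk on the simply connected surface $\Sigma_\Gamma$ rather than spiralling or escaping---which is controlled by the translational periodicity and the graph/embeddedness description of Theorem~\ref{plateau}; everything else is a transcription of arguments already carried out for Theorems~\ref{plateau} and~\ref{fb}.
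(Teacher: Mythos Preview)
Your approach is exactly what the paper does: it states this lemma immediately after the convexity argument for part~(d), remarks that ``$Q\cap\bar\gamma_1$ being a single point is the key,'' and says the generalization follows easily---your proof is a faithful transcription of that argument with $\gamma_0$ playing the role of $\bar\gamma_{q_4}$. One small caveat (shared with the paper's sketch): the bare hypothesis ``one-to-one projection'' does not literally force every vertical half-plane $Q$ through $\gamma_0$ to meet $\bar\gamma_1$ in a single point---that needs $\pi(\bar\gamma_1)$ to be star-shaped about the origin, which does hold in every application in the paper since there $\pi(\bar\gamma_1)$ is convex.
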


Finally let's prove (e). Theorem \ref{plateau} (b) implies that $\hat{\Sigma}_{q_4}\setminus\gamma_{q_4}$ is a graph over $\pi_4(\Sigma_{q_4}\setminus{\gamma}_{q_4})$. The two boundary curves  $\partial\hat{\Sigma}_{q_4}\setminus(\gamma_{q_4}\cup\gamma_1)$ are the parallel translates of one another. Therefore $\Sigma_{q_4}$ is embedded. Now we are going to use Krust's argument (see Section 3.3 of \cite{DHKW}) to prove that $\Sigma_{q_4}^*$ is also a graph. Let $X=(x_1,x_2,x_3)$ be the immersion of $[0,a]\times[0,\beta]$ into $\Sigma_{q_4}$ and $X^*=(x_1^*,x_2^*,x_3^*)$ the immersion: $[0,a]\times[0,\beta]\rightarrow\Sigma_{q_4}^*$. We can write the orthogonal projections of $X$ and $X^*$ into the horizontal plane as respectively
$$w(z):=x_1(z)+ix_2(z),\,\,w^*(z):=x_1^*(z)+ix_2^*(z),\,\,z=x+iy,\,\,(x,y)\in[0,a]\times[0,\beta].$$
Then $w$ is a map from $[0,a]\times[0,\beta]$ onto the triangle $\Delta_4$. Given two distinct points $z_1,z_2\in(0,a]\times(0,\beta]$, we have $w(z_1)\neq w(z_2)$ because $X((0,a]\times(0,\beta])$ is a graph over $\Delta_4\setminus\{q_4\}$.  Let $\ell:[0,1]\rightarrow \Delta_4$ be the line segment connecting $p_1:=w(z_1)$ to $p_2:=w(z_2)$ with constant speed, that is, $\ell(0)=p_1$, $\ell(1)=p_2$ and $|\dot{\ell}(t)|=|p_2-p_1|$ for all $t\in[0,1]$.

(1) Choosing a fundamental region $\hat{\Sigma}_{q_4}$ of $\Sigma_{q_4}$ suitably, we may suppose $\ell$ is disjoint from $\pi(\partial\hat{\Sigma}_{q_4})$. Then there is a smooth curve $c:[0,1]\rightarrow(0,a]\times(0,2\beta]$ such that $\ell(t)=w(c(t))$. Clearly $|\dot{c}(t)|>0$ for all $0\leq t\leq1$. Let $g:[0,a]\times\mathbb R\rightarrow\mathbb C$ be the Gauss map of $\Sigma_{q_4}$. Krust showed that the inner product $W$ of the two vectors $p_2-p_1$ and $i(w^*(z_2)-w^*(z_1))$ of $\mathbb R^2$ is written as
$$W:=\langle p_2-p_1,i(w^*(z_2)-w^*(z_1))\rangle=\int_0^1\frac{1}{4}|\dot{c}(t)|^2\left(|g(c(t))|^2-\frac{1}{|g(c(t))|^2}\right)dt.$$
Since $\Sigma_{q_4}\setminus{{\gamma}}_{q_4}$ is a multi-graph, we have $|g|>1$ on $(0,a]\times\mathbb R$. Hence $W>0$ and therefore $w^*(z_1)\neq w^*(z_2)$.

(2) Suppose $\ell$ intersects $\pi(\partial\hat{\Sigma}_{q_4})$ at the point $q_4$. Then $c$ is piecewise smooth and there exist $0<d_1<d_2<1$ such that $q_4\notin w(c([0,d_1)))\cup w(c((d_2,1]))$, $w(c([d_1,d_2]))=\{q_4\}$, and $|\dot{c}(t)|>0$ for $t\in[0,d_1)\cup(d_2,1]$. Clearly $$|g(c(t))|=1\,\,{\rm for}\,\, t\in[d_1,d_2],\,\,\,\,|g(c(t))|>1\,\, {\rm for}\,\, t\in[0,d_1)\cup (d_2,1].$$ Hence
$$W=\left(\int_0^{d_1}+
\int_{d_2}^1\right)\,\frac{1}{4}|\dot{c}(t)|^2\left(|g(c(t))|^2-\frac{1}{|g(c(t))|^2}\right)dt>0$$
and so $w^*(z_1)\neq w^*(z_2)$.

Thus we can conclude that $X^*((0,a]\times(0,\beta))$ is a graph over the $x_1^*x_2^*$-plane. Since $X^*([0,a]\times\{0\})$ coincides with $X^*([0,a]\times\{\beta\})$, $X^*((0,a]\times[0,\beta])=\Sigma^*_{q_4}\setminus\gamma_{q_4}$ is also a graph over its projection into the $x_1^*x_2^*$-plane. This proves (e).
\end{proof}

\section{Pyramid}

It has been possible to construct free boundary minimal annuli in a tetrahedron $T$ because $T$ is the simplest polyhedron in $\mathbb R^3$. In general one cannot find a free boundary minimal annulus in a polyhedron like a quadrilateral pyramid $P_y$ in Figure 8. Of course, given a translationally periodic curve $\Gamma_q$ with fundamental piece $\bar{\gamma}_q\cup\bar{\gamma}_1$ corresponding to $c_1\nu_1,\ldots,c_5\nu_5$, respectively, where $\nu_1,\ldots,\nu_5$ are the unit normals to the faces of $P_y$, one can show that there exists a translationally periodic minimal surface $\Sigma_{q}$ spanning $\Gamma_q$. One can also find a point $q_5\in\Delta_5$ such that $\Sigma_{q_5}^*$ is a minimal annulus. However, $\Sigma_{q_5}^*$ may be a free boundary minimal annulus not in $P_y$ but in a polyhedron $P_o$ like Figure 8 which has the same unit normals as those of $P_y$. And yet, in case $P_y$ is a regular pyramid or a rhombic pyramid, we can show that $P_y$ has a free boundary minimal annulus. Surprisingly, we can also show that there exist genus zero free boundary minimal surfaces in every Platonic solid.

\begin{center}
\includegraphics[width=5.4in]{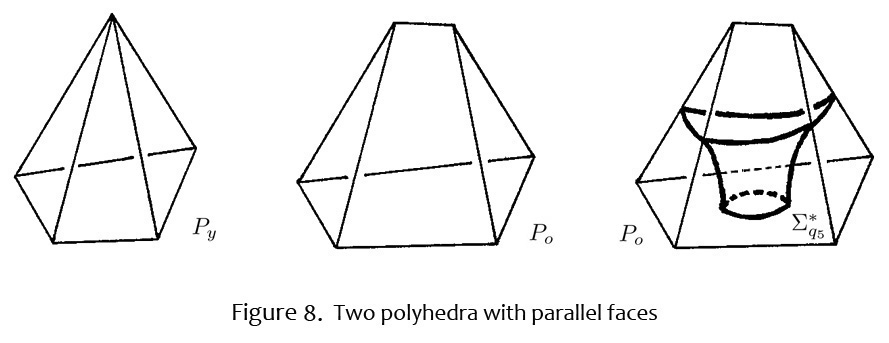}\\
\end{center}

\begin{theorem}\label{ps}
Let $P_y$ be a right pyramid whose base $B$ is a regular $n$-gon. Then there exists a free boundary minimal annulus $A$ in $P_y$ which is a graph over $B$. $A$ is invariant under the rotation by $2\pi/n$ about the line through the apex and the center of $B$. One component of $\partial A$ is convex and closed in $B$ and the other is convex in each remaining face of $P_y$.
\end{theorem}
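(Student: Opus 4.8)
The plan is to imitate the construction of Theorem \ref{fb}, exploiting the rotational symmetry of $P_y$ so that the point of vanishing flux can be read off by symmetry rather than found by a degree argument. Write $B$ for the base, $p$ for the apex, $F_1,\dots,F_n$ for the lateral faces, and $\nu_0,\nu_1,\dots,\nu_n$ for the outward unit normals to $B,F_1,\dots,F_n$. Put the $x_3$-axis through $p$ and the center of $B$, so that $\nu_0$ is vertical and the rotation $\rho$ by $2\pi/n$ about the $x_3$-axis permutes $\nu_1,\dots,\nu_n$ cyclically. The divergence theorem gives $c_0\nu_0+c(\nu_1+\dots+\nu_n)=0$ with $c_0=\mathrm{Area}(B)$ and $c=\mathrm{Area}(F_i)$, and projecting to the $x_1x_2$-plane $P_0$ shows that $c\,\pi_0(\nu_1),\dots,c\,\pi_0(\nu_n)$ are the successive edges of a regular $n$-gon $\Delta\subset P_0$. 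As in Theorem \ref{fb} I would let $\bar\gamma_1$ be the piecewise linear curve built from the ordered vectors $c\nu_1,\dots,c\nu_n$, let $\bar\gamma_0$ be the vertical segment corresponding to $-c_0\nu_0$ issuing from the center $O$ of $\Delta$, and let $\Gamma:=\gamma_0\cup\gamma_1$ be the translationally periodic extension (the translation $\tau$ being by $-c_0\nu_0$); then $\gamma_0$ is the $x_3$-axis and a fundamental piece of $\gamma_1$ projects one-to-one onto the convex curve $\partial\Delta$. By Theorems \ref{main} and \ref{plateau} there is a unique simply connected translationally periodic minimal surface $\Sigma$ spanning $\Gamma$, which is embedded and a graph over its projection.

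The first key point is that $\Gamma$ is in fact invariant under the screw motion $\sigma$ that rotates by $2\pi/n$ about the $x_3$-axis and translates by $-c_0\nu_0/n$: since $c(\nu_i)_3=c_0/n$ by symmetry, $\bar\gamma_1$ can be positioned so that $\sigma$ carries each edge $c\nu_i$ to $c\nu_{i+1}$, and $\sigma^n=\tau$. Because $\sigma(\Sigma)$ is again a simply connected translationally periodic minimal surface spanning $\sigma(\Gamma)=\Gamma$, uniqueness in Theorem \ref{plateau}(c) forces $\sigma(\Sigma)=\Sigma$. Consequently the flux of $\Sigma$ along $\bar\gamma_0$ vanishes: the inward unit conormal $n(q)$ along the vertical segment $\bar\gamma_0$ is horizontal, and $\sigma$-invariance makes it rotate by $2\pi/n$ as one moves up by one $\sigma$-step, so $\int_{\bar\gamma_0}n$ is the sum of a fixed horizontal vector and its $n$ rotates by multiples of $2\pi/n$, hence $0$ (equivalently the flux along $\bar\gamma_1$ vanishes). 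Therefore the conjugate surface $\Sigma^*$, with $Y^*=X^*\circ X^{-1}:\Sigma\to\Sigma^*$ the associated local isometry, closes up to a compact minimal annulus; and being the conjugate of a $\sigma$-invariant surface it is invariant under the rotation by $2\pi/n$ about the $x_3^*$-axis, since the translational part of the conjugate screw motion must vanish on a compact surface.

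Next I would run Smyth's argument from Section 4 verbatim: with $y_i^*$ the linear function satisfying $\nabla y_i^*=c_i\nu_i$, the surface $\Sigma^*$ meets each plane $\{y_i^*=d_i^*\}$ orthogonally along $Y^*(c_i\nu_i)$, $i=0,\dots,n$, these $n+1$ planes being parallel to the faces of $P_y$, and the conormal there being $\nu_i$, so near $Y^*(c_i\nu_i)$ the surface lies on the interior side. Since $x_3$ is nondecreasing along $\gamma_1$ (each $\nu_i$ has positive third component, as $p$ lies above $B$), Lemma \ref{convexity} shows that $Y^*(\bar\gamma_0)$ lies in the base plane $\{x_3^*=0\}=\{y_0^*=d_0^*\}$ and all of $\Sigma^*$ lies on and above it; the lemma of Section 5 dual to Lemma \ref{convexity} shows $Y^*(\bar\gamma_0)$ is a convex closed curve; and the Gauss-map argument of Theorem \ref{fb}(d) — a half plane through an edge $c\nu_i$ meets $\gamma_0$ in a single point, so $g'\neq0$ in the interior of that edge — shows each arc of $Y^*(\bar\gamma_1)$ lying in a lateral plane is convex and meets the edges orthogonally. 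Finally Krust's argument exactly as in Theorem \ref{fb}(e), using that $\Sigma\setminus\gamma_0$ is a multigraph with $|g|>1$ on its interior, gives that $\Sigma^*$ is a graph over its horizontal projection, hence embedded.

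The main obstacle, flagged in the introduction to Section 6, is to show that the $n+1$ planes $\{y_i^*=d_i^*\}$ actually enclose a right pyramid over a regular $n$-gon similar to $P_y$, rather than some other polyhedron with the same face normals, so that a homothety carries $\Sigma^*$ onto a genuine free boundary minimal annulus $A\subset P_y$ with $\partial A$ split as asserted. Here the rotation by $2\pi/n$ is essential: it forces $d_1^*=\dots=d_n^*$, so the $n$ lateral planes sit symmetrically about the $x_3^*$-axis and, together with the horizontal plane $\{x_3^*=0\}$ below which $\Sigma^*$ never goes, bound a single bounded region whose dihedral angles — determined by the $\nu_i$ alone — agree with those of $P_y$; that $Y^*(\bar\gamma_1)$ lies on the lateral faces near the top vertex and that $\Sigma^*$ stays inside then follow from the graph property, the convex-hull property of minimal surfaces, and the tangent-cone argument of Theorem \ref{fb}(c). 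I expect the bookkeeping in this last step — in particular checking that $Y^*(\bar\gamma_0)$ and $\pi_0(Y^*(\bar\gamma_1))$ sit inside the base polygon in the expected nested way — to be the only genuinely delicate part, everything else being a faithful transcription of Sections 4 and 5.
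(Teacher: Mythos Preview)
Your proposal is correct and follows essentially the same route as the paper: use the screw motion $\sigma$ and uniqueness from Theorem~\ref{plateau}(c) to get $\sigma(\Sigma)=\Sigma$, deduce that the flux along $\bar\gamma_0$ vanishes by summing $n$ rotated copies of a horizontal vector, and then show the conjugate annulus is $\sigma_0$-invariant so that the lateral free-boundary planes are permuted by the $2\pi/n$ rotation and hence bound a pyramid similar to $P_y$. The paper carries out the $\sigma_0$-invariance by the explicit computation $X^*(\sigma(p))=\sigma_0(X^*(p))$ rather than your compactness remark, and it simply cites the arguments of Theorem~\ref{fb} for the graph and convexity claims where you invoke Lemma~\ref{convexity}, its dual, and Krust directly; but these are the same ingredients.
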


\begin{proof}
Let  $F_1,\ldots,F_n$ be the faces of $P_y$ other than the base $B$. Denote by $\nu_0,\nu_1,\ldots,\nu_n$ the outward unit normals to $B,F_1,\ldots,F_n$, respectively. Then there exists a unique positive constant $c$ such that
$$c\nu_0+\nu_1+\cdots+\nu_n=0.$$
Assume that $B$ lies in the $x_1x_2$-plane with center at the origin. Let $\bar{\gamma}_0$ be a vertical line segment of length $c$\, on the $x_3$-axis and let $\bar{\gamma}_1$ be a connected piecewise linear curve determined by $\nu_1,\ldots,\nu_n$(i.e., $\nu_i$ is the $i$-th oriented line segment of $\bar{\gamma}_1$)  such that the projection $\pi(\bar{\gamma}_1)$ of $\bar{\gamma}_1$ onto the $x_1x_2$-plane is a regular $n$-gon centered at the origin. Moreover, let's assume that the two end points of $\bar{\gamma}_0$ and $\bar{\gamma}_1$ have the same $x_3$-coordinates: $0$ and $c$. $\bar{\gamma}_0\cup\bar{\gamma}_1$ determines a complete helically periodic curve $\Gamma$ of which $\bar{\gamma}_0\cup\bar{\gamma}_1$ is a fundamental piece. $\Gamma$ is translationally periodic as well. Then Theorem \ref{main} guarantees that there exists a translationally periodic minimal surface $\Sigma$ spanning $\Gamma$.

Define the screw motion $\sigma$ by
$$\sigma(r\cos\theta,r\sin\theta,x_3)=\left(r\cos(\theta+\frac{2\pi}{n}), r\sin(\theta+\frac{2\pi}{n}), x_3+\frac{c}{n}\right).$$
Obviously $\Sigma$ is invariant under $\sigma^n$. The point is that $\Sigma$ is invariant under $\sigma$ as well. This is because by Theorem \ref{plateau} the periodic Plateau solution spanning $\Gamma$ uniquely exists and $\sigma(\Sigma)$ also spans $\Gamma$. So evenly divide $\bar{\gamma}_0$ into $n$ line segments $\bar{\gamma}_{0}^1,\ldots,\bar{\gamma}_{0}^n$ such that
$$\bar{\gamma}_{0}^k:=\{p\in\bar{\gamma}_{0}:\frac{k-1}{n}c\leq x_3(p)\leq\frac{k}{n}c\},\,\,\,k=1\ldots, n.$$
Similarly, set
$$\Sigma^k=\{p\in\Sigma:\frac{k-1}{n}c\leq x_3(p)\leq\frac{k}{n}c\},\,\,\,k=1,\ldots,n.$$
It is clear that
$$\sigma(\bar{\gamma}_0^k)=\bar{\gamma}_0^{k+1},\,\,\sigma(\Sigma^k)=\Sigma^{k+1},\,\,k=1,\ldots,n-1,\,\,\,{\rm and}\,\,\, \sigma(\Sigma^n)=\sigma^n(\Sigma^1).$$
Denote by $f_\gamma(\Sigma)$ the flux of $\Sigma$ along $\gamma\subset\partial\Sigma$, that is,
$$f_\gamma(\Sigma)=\int_{p\in\gamma}n(p),$$
where $n(p)$ is the inward unit conormal to $\gamma$ on $\Sigma$ at $p\in\gamma$. Clearly
$$f_{\sigma(\gamma)}(\sigma(\Sigma))=\sigma(f_\gamma(\Sigma))\,\,\,{\rm and}\,\,\,f_{\bar{\gamma}_{0}}(\Sigma)=\sum_{k=1}^{n}f_{\bar{\gamma}_{0}^k}(\Sigma^k).$$
Hence
\begin{eqnarray*}
\sigma(f_{\bar{\gamma}_{0}}(\Sigma))&=&\sum_{k=1}^n\sigma(f_{\bar{\gamma}_{0}^k}(\Sigma^k))\,\,=\,\,\sum_{k=1}^n f_{\sigma(\bar{\gamma}_{0}^k)}(\sigma(\Sigma^k))\\
&=&\sum_{k=1}^{n-1}f_{\bar{\gamma}_{0}^{k+1}}(\Sigma^{k+1})+f_{\sigma^n(\bar{\gamma}^1_{0})}(\sigma^n(\Sigma^1))\\
&=&\sum_{k=1}^nf_{\bar{\gamma}_{0}^k}(\Sigma^k)\,\,=\,\,f_{\bar{\gamma}_{0}}(\Sigma).
\end{eqnarray*}
But $\sigma(f_{\bar{\gamma}_{0}}(\Sigma))=f_{\bar{\gamma}_{0}}(\Sigma)$  holds only when $f_{\bar{\gamma}_{0}}(\Sigma)=0$. In this case $f_{\bar{\gamma}_1}(\Sigma)$ also vanishes. Therefore $\Sigma^*$ is a well-defined minimal annulus.

We now show that $\Sigma^*$ is in $P_y$ with free boundary. Choose a point $p\in\Sigma^k$ with coordinates
$$X(p)=(x_1(p),x_2(p),x_3(p)).$$
Denote by $X^*(p)$ the point of $\Sigma^{k*}$ corresponding to $p\in\Sigma^k$,
$$X^*(p)=(x_1^*(p),x_2^*(p),x_3^*(p)).$$
The coordinates of $\sigma(p)$ are
$$X(\sigma(p))=\left((x_1(p),x_2(p))\cdot\left( \begin{array}{c}
\cos\alpha\\-\sin\alpha
\end{array}
\begin{array}{c}
\sin\alpha\\\cos\alpha
\end{array}\right),\,x_3(p)+\frac{c}{n}\right),\,\,\,\alpha=\frac{2\pi}{n}.$$
Then
\begin{eqnarray*}
X^*(\sigma(p))&=&\left((x_1^*(p),x_2^*(p))\cdot\left( \begin{array}{c}
\cos\alpha\\-\sin\alpha
\end{array}
\begin{array}{c}
\sin\alpha\\\cos\alpha
\end{array}\right),\,x_3^*(p)+0\right)\\
&=&\sigma_0(X^*(p)),
\end{eqnarray*}
where $\sigma_0$ is the rotation in $\mathbb R^3$ defined by
$$\sigma_0(r\cos\theta,r\sin\theta,x_3)=\left(r\cos(\theta+\frac{2\pi}{n}), r\sin(\theta+\frac{2\pi}{n}), x_3\right).$$
Hence
\begin{equation}\label{rot}
(\Sigma^{k+1})^*=\sigma_0(\Sigma^{k*}),\,\,k=1,\ldots,n
\end{equation}
and so
\begin{eqnarray*}
\sigma_0(\Sigma^*)&=&\sigma_0(\Sigma^{1*}\cup\cdots\cup\Sigma^{n*})\,\,=\,\,\Sigma^{2*}\cup\cdots\cup\Sigma^{n*}\cup\sigma_0(\Sigma^{n*})\\
&=&\Sigma^{2*}\cup\cdots\cup\Sigma^{n*}\cup\sigma_0^n(\Sigma^{1*})\,\,=\,\,\Sigma^*.
\end{eqnarray*}
Therefore $\Sigma^*$ is invariant under the rotation $\sigma_0$.
We know that the curve $X^*(\nu_1)$ is in the plane $\{y_1^*=d_1^*\}$ orthogonal to $\nabla y_1^*=\nu_1$ and $\Sigma^*$ is perpendicular to that  plane along $X^*(\nu_1)$.
Therefore \eqref{rot} implies that $\Sigma^*$ is a free boundary minimal surface in the pyramid $P_m$ bounded by a plane perpendicular to $\nu_{n+1}$ and by the $n$ planes $\cup_{i=1}^{n}(\sigma_0)^i(\{y_1^*=d_1^*\})$. $P_m$ is similar to $P_y$ and a homothetic expansion $A$ of $\Sigma^*$ is a free boundary minimal annulus in $P_y$.
By the same argument as in the proof of Theorem \ref{fb} we see that $A$ is a graph over $B$ and $\partial A$ is convex on each face of $P_y$.

There is another way of constructing $\Sigma^*$: Smyth's method. Divide the regular $n$-gon $B$ into $n$ congruent triangles $B_1,\ldots,B_n$. Then one can tessellate $P_y$ by $n$ congruent tetrahedra $T_1,\ldots,T_n$ with the apex of $P_y$ as their common vertex and $B_1,\ldots,B_n$ as their bases. Smyth's theorem gives us three free boundary minimal disks in $T_1$. Among them let's choose the one that is disjoint from the line through the apex and the center of $B$. By reflections one can extend the chosen minimal disk to a free boundary minimal annulus in $P_y$. This annulus must be the same as $A$ by the uniqueness of Theorem \ref{plateau} (c).
\end{proof}

\begin{corollary}\label{pl}
Every Platonic solid with regular $n$-gon faces  has five types of embedded, genus zero, free boundary minimal surfaces $\Sigma_1,\ldots,\Sigma_5$. Three of them, $\Sigma_1,\Sigma_2,\Sigma_3$, intersect each face along 1, $n$, $2n$ closed convex congruent curves, respectively. $\Sigma_4$ intersects every edge of the solid and $\Sigma_5$ surrounds every vertex of the solid. {\rm (See Figure 3.)}
\end{corollary}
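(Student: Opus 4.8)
The plan is to obtain each $\Sigma_i$, $i=1,\dots,5$, by cutting $P_s$ into congruent copies of a right pyramid over a regular polygon (or over a rhombus, or into congruent tetrahedra), solving a free boundary problem in one such piece by Theorem~\ref{ps} (or by Corollary~\ref{pr}, or by Smyth's theorem), and then spreading the solution to all the remaining pieces by the reflection symmetries of $P_s$, using the uniqueness of Theorem~\ref{plateau}(c) so that the pieces fit together analytically across the internal walls.

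For $\Sigma_1$, let $O$ be the center of $P_s$ and write $P_s=\bigcup_F P^F$ with $P^F:=O\cone F$ the right pyramid over the face $F$. Since $F$ is a regular $n$-gon, Theorem~\ref{ps} provides a free boundary minimal annulus $A^F\subset P^F$ that is a graph over $F$ and has boundary $\gamma_F\cup\delta_F$, where $\gamma_F$ is a convex curve in $F$ and $\delta_F$ runs along the lateral faces $O\cone e$ of $P^F$, $e$ an edge of $F$. The plane through $O$ and $e$ is a symmetry plane of $P_s$ that interchanges the two faces meeting at $e$; hence the reflection of $A^F$ in that plane is a free boundary minimal annulus in the adjacent pyramid $P^{F'}$ which is again a graph over $F'$, so by uniqueness (Theorem~\ref{plateau}(c)) it coincides with $A^{F'}$. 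Therefore $\Sigma_1:=\bigcup_F A^F$ is, by Schwarz reflection, an analytic minimal surface, embedded (every $A^F$ is a graph and distinct pyramids have disjoint interiors), with free boundary $\bigsqcup_F\gamma_F$ on $\partial P_s$ and meeting each $F$ in the single convex curve $\gamma_F$. For the genus, the gluing locus $\bigcup_F\delta_F$ is a graph: the arcs of the various $\delta_F$ fit together end to end at points $w_v$ on the spokes $\overline{Ov}$, and the resulting graph is combinatorially the $1$-skeleton of $P_s$. Since $\Sigma_1$ minus this graph is a disjoint union of $f$ half-open annuli, a standard Euler characteristic count gives $\chi(\Sigma_1)=V-E=2-f$, where $V,E,f$ are the numbers of vertices, edges and faces of $P_s$; as $\Sigma_1$ has $f$ boundary circles it has genus $0$.

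The surfaces $\Sigma_2$ and $\Sigma_3$ come from refining this picture. Subdividing each $P^F$ into $n$ (resp.\ $2n$) congruent tetrahedra — dividing $F$ into that many congruent triangles about its center $c_F$ and coning with apex $O$ — and applying Smyth's theorem in one such tetrahedron, one gets three free boundary minimal disks there; I would choose the one that is carried onto a corresponding disk of every neighbouring tetrahedron by the reflection across the shared wall, and glue all copies by Schwarz reflection to obtain $\Sigma_2$ (resp.\ $\Sigma_3$). By the symmetry of the construction and the convexity clauses of Theorems~\ref{fb} and \ref{ps}, these are free boundary, embedded, and meet each face in $n$ (resp.\ $2n$) congruent convex curves, and the same Euler characteristic accounting, with the appropriate finer graph as gluing locus, gives genus $0$. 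For $\Sigma_4$ one instead uses a decomposition of $P_s$ into congruent pieces straddling its edges — a right pyramid with rhombic base across each edge, or a tetrahedron when $P_s$ is the tetrahedron — to which Corollary~\ref{pr} (or Theorem~\ref{fb}) applies, the pieces being related by the symmetry planes of $P_s$ that meet $\partial P_s$ along its edges; gluing gives a genus-$0$ free boundary surface $\Sigma_4$ crossing every edge. For $\Sigma_5$ one takes at each vertex $v$ the right pyramid with apex $v$ over the vertex figure, a regular $m$-gon with $m$ the degree of $v$, applies Theorem~\ref{ps}, and glues by the symmetry planes through $v$, so that $\Sigma_5$ surrounds every vertex. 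In each case embeddedness, genus $0$ and the stated incidence pattern are checked exactly as for $\Sigma_1$; when $P_s$ is the cube these five are the Schwarz $P$-surface, two further surfaces, Neovius' surface and Schoen's I-WP surface, as noted in the introduction.

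The main obstacle is the symmetry-combinatorial bookkeeping behind $\Sigma_2,\dots,\Sigma_5$: one must exhibit a system of fundamental pieces together with a single building-block minimal disk or annulus that is genuinely invariant under the subgroup of reflections used in the gluing — in particular, for $\Sigma_2$ and $\Sigma_3$, a consistent choice among Smyth's three disks in every tetrahedron of the refinement — and then carry out the Euler characteristic count to confirm that the glued surface is connected of genus $0$ rather than of higher genus. Verifying that the relevant pieces really are right pyramids over regular polygons or rhombi, so that Theorem~\ref{ps} and Corollary~\ref{pr} apply, is precisely where the regularity of $P_s$ enters.
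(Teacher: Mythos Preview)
Your construction of $\Sigma_1$ matches the paper's, but for $\Sigma_2,\ldots,\Sigma_5$ you diverge, and for $\Sigma_2,\Sigma_3$ the approach does not yield the surfaces described in the statement.

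The Corollary asserts that $\Sigma_2$ and $\Sigma_3$ meet each face $F$ along $n$ (resp.\ $2n$) \emph{closed} convex curves. A Smyth disk in the tetrahedron $T_n=O\cone F_n$ has a single boundary curve that visits all four faces of $T_n$, so its trace on $F_n$ is an \emph{arc}, not a closed curve. When you reflect around the axis $\overline{Oc_F}$ to fill $P^F$, these arcs link end to end: if the arc on $F_n$ joins the two interior walls you get \emph{one} closed curve in $F$ (this is exactly how the paper identifies the third Smyth disk $S_6$ with a piece of $\Sigma_1$), while if it touches the exterior wall the resulting boundary component leaves $F$ across an edge of $P_s$. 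No choice among the three Smyth disks in $T_n$ or $T_{2n}$ can place $n$ or $2n$ disjoint closed curves on $F$. The paper instead applies Theorem~\ref{fb} (the free boundary minimal \emph{annulus} in a tetrahedron, not Smyth's disk) to $T_n$ and to $T_{2n}$: that annulus already has one boundary component which is a closed convex curve lying entirely in the base triangle $F_n$ (resp.\ $F_{2n}$), and reflecting then produces $n$ (resp.\ $2n$) such curves per face.

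For $\Sigma_4$ and $\Sigma_5$ your edge- and vertex-pyramid constructions are different in spirit from the paper's and, as written, do not assemble correctly. The quadrilateral $v_1c_1v_2c_2$ straddling an edge is generally not planar (check the cube), so Corollary~\ref{pr} does not apply; and the vertex-figure pyramid with apex at $v$ has its base in the \emph{interior} of $P_s$, on a plane that is not a symmetry plane, so the annulus from Theorem~\ref{ps} cannot be reflected to a global free boundary surface. The paper bypasses these combinatorial issues by working entirely in the single fundamental tetrahedron $T_{2n}$ (whose dihedral angles are all $\le 90^\circ$): the two remaining Smyth disks $S_4,S_5$ there---distinguished by which pair of edges of $F_{2n}$ their boundary arc connects---extend under the full reflection group of $P_s$ to $\Sigma_4$ (crossing every edge) and $\Sigma_5$ (surrounding every vertex).
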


\begin{proof}
Given a Platonic solid $P_s$, let $p$ be its center and $F$ one of its faces. Then the cone from $p$ over $F$ is a right pyramid with a regular $n$-gon base and hence $P_s$ is tessellated into congruent pyramids. Each pyramid contains an embedded free boundary minimal annulus by Theorem \ref{ps}. The union of all those minimal annuli in the congruent pyramids of the tessellation, denoted as $\Sigma_1$, is the analytic continuation of each minimal annulus into an embedded, genus zero, free boundary minimal surface in $P_s$.

The regular $n$-gon $F$ can be tessellated into $n$ isosceles, one of which is denoted as $F_n$. $F_n$ can be divided into two congruent right triangles, one of which is $F_{2n}$. Then the cone from $p$ over $F_n$ is a tetrahedron $T_n$ and  $T_{2n}$ denotes the tetrahedron determined by $p$ and $F_{2n}$. Note here that all the dihedral angles of $T_{2n}$ are $\leq90^\circ$ whereas an edge of $T_n$ has dihedral angle $=120^\circ$ in case $P_s$ is a tetrahedron, an octahedron or an icosahedron. Fortunately, the three dihedral angles of $T_n$ along $\partial F_n$ are $\leq90^\circ$. Hence  by Theorem \ref{fb} there exist free boundary minimal annuli $A_2$ in $T_n$ and $A_3$ in $T_{2n}$ one boundary component of which is a closed convex curve in $F_n$ and in $F_{2n}$, respectively. Then $\Sigma_2,\Sigma_3$ are exactly the analytic continuations (by reflection) of $A_2,A_3$, respectively.

On the other hand, Smyth's theorem gives three minimal disks $S_4,S_{5},S_6$ with free boundary in $T_{2n}$. Only one of them, say $S_6$, is disjoint from $\partial F$. Then $S_6$ must be a subset of $\Sigma_1$. Since $P_s$ is tessellated by congruent copies of $T_{2n}$, both $S_4$ and $S_5$ can be extended analytically into free boundary embedded minimal surfaces of genus zero in $P_s$, which we denote as $\Sigma_4$ and $\Sigma_5$. Assuming that $S_4$ connects the two orthogonal edges of $F_{2n}$, we see that every boundary component of $\Sigma_4$ intersects exactly one edge of $P_s$ orthogonally. Then $S_5$ connects two nonorthogonal edges of $F_{2n}$ and hence each boundary component of $\Sigma_5$ surrounds exactly one vertex of $P_s$.
\end{proof}

\begin{corollary}\label{pr}
If $P_r$ is a right pyramid with rhombic base $B$, there exists a free boundary minimal annulus $A$ in $P_r$ which is a graph over $B$. One boundary component of $A$ is  convex and closed in $B$ and the other one is convex in each remaining face of $P_r$.
\end{corollary}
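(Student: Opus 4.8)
The plan is to run the argument of Theorem~\ref{ps} almost verbatim, with the cyclic group of order $n$ replaced by the order-two group generated by a half-turn screw motion; the only genuinely new point is the elementary observation that a single $180^\circ$ rotation already forces the relevant flux to vanish.

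\emph{Setup.} Let $\nu_0$ be the outward unit normal to the rhombic base $B$ and $\nu_1,\ldots,\nu_4$ those of the lateral faces $F_1,\ldots,F_4$, taken in cyclic order, so that $F_1,F_3$ and $F_2,F_4$ are the two pairs of opposite faces. Since $P_r$ is a right pyramid, its apex lies over the incenter of $B$, which is equidistant from the four sides of $B$; hence the four dihedral angles along $\partial B$ are equal, so $\nu_1,\ldots,\nu_4$ all make the same angle with the vertical, and, opposite sides of $B$ being parallel, the horizontal parts of $\nu_1$ and $\nu_3$, and of $\nu_2$ and $\nu_4$, are equal and opposite. In particular the four lateral faces are congruent and there is a unique $c>0$ with $c\nu_0+\nu_1+\nu_2+\nu_3+\nu_4=0$. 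Place $B$ in the $x_1x_2$-plane centered at the origin, let $\bar{\gamma}_0$ be the segment of length $c$ on the $x_3$-axis corresponding to $-c\nu_0$, and let $\bar{\gamma}_1$ be the piecewise linear curve with consecutive edge vectors $\nu_1,\nu_2,\nu_3,\nu_4$, positioned so that its endpoints have $x_3$-coordinates $0$ and $c$ and so that $\pi(\bar{\gamma}_1)$ is a convex quadrilateral (in fact a parallelogram) centered at the origin. Then $\bar{\gamma}_0\cup\bar{\gamma}_1$ generates a complete translationally periodic curve $\Gamma=\gamma_0\cup\gamma_1$, with period the vertical translation $\tau$ by $c$, satisfying the hypotheses of Theorems~\ref{main} and~\ref{plateau}: there is a unique, embedded, translationally periodic minimal surface $\Sigma$ spanning $\Gamma$, and a fundamental region of $\Sigma\setminus\gamma_0$ is a graph over its projection to the $x_1x_2$-plane.

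\emph{Vanishing flux.} Let $\sigma$ be the screw motion obtained by composing the $180^\circ$ rotation about the $x_3$-axis with the vertical translation by $c/2$, so $\sigma^2=\tau$. Because $\nu_3$ and $\nu_4$ are the images of $\nu_1$ and $\nu_2$ under the rotation by $180^\circ$, and because $\pi(\bar{\gamma}_1)$ is centered at the origin, $\sigma$ carries the lower half of $\bar{\gamma}_1$ (the subarc from $x_3=0$ to $x_3=c/2$, with edge vectors $\nu_1,\nu_2$) onto the upper half (edge vectors $\nu_3,\nu_4$) and carries the lower half of $\bar{\gamma}_0$ onto the upper half; hence $\sigma(\Gamma)=\Gamma$. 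Since $\sigma(\Sigma)$ is again a translationally periodic Plateau solution spanning $\Gamma$, uniqueness (Theorem~\ref{plateau}(c)) gives $\sigma(\Sigma)=\Sigma$. Split $\bar{\gamma}_0=\bar{\gamma}_0^1\cup\bar{\gamma}_0^2$ and $\Sigma=\Sigma^1\cup\Sigma^2$ into halves with $\sigma(\bar{\gamma}_0^1)=\bar{\gamma}_0^2$, $\sigma(\Sigma^1)=\Sigma^2$, $\sigma(\Sigma^2)=\tau(\Sigma^1)$. Then the same flux computation as in Theorem~\ref{ps}, using $f_{\sigma(\gamma)}(\sigma(\Sigma))=\sigma(f_\gamma(\Sigma))$ and the translation invariance of the flux, yields $\sigma\!\left(f_{\bar{\gamma}_0}(\Sigma)\right)=f_{\bar{\gamma}_0}(\Sigma)$. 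But $f_{\bar{\gamma}_0}(\Sigma)$ is a horizontal vector and $\sigma$ acts on horizontal vectors by $v\mapsto -v$, so $f_{\bar{\gamma}_0}(\Sigma)=0$; then $f_{\bar{\gamma}_1}(\Sigma)=0$ as well. Hence the local isometry $Y^*$ onto the conjugate surface has zero period along $\bar{\gamma}_0$ and along $\bar{\gamma}_1$, so $\Sigma^*$ is a well-defined minimal annulus. This is the only step where the mere $\mathbb{Z}/2$-symmetry of the rhombus could have caused trouble, and it does not, since $v\mapsto -v$ has no nonzero fixed vector.

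\emph{Transplanting into $P_r$.} By Smyth's computation in Section~4, $\Sigma^*$ meets each plane $\{y_i^*=d_i^*\}$ (with $\nabla y_i^*=\nu_i$) orthogonally; these five planes enclose a right rhombic pyramid $P_m$ similar to $P_r$, so a homothetic expansion $A$ of $\Sigma^*$ is a free boundary minimal annulus in $P_r$, invariant under the $180^\circ$ rotation about the line through the apex and the center of $B$. As in the proof of Theorem~\ref{fb}(e), Krust's argument applies, since $\Sigma\setminus\gamma_0$ is a multigraph over $B$ whose Gauss map satisfies $|g|>1$ in the interior; hence $A$ is a graph over $B$. The boundary component of $\Sigma^*$ coming from $\gamma_0$ lies in the horizontal plane $\{y_0^*=d_0^*\}$ and is convex by the lemma dual to Lemma~\ref{convexity} (the fundamental piece $\bar{\gamma}_1$ projects one-to-one onto a convex curve); the boundary component coming from $\gamma_1$ lies on the four planes $\{y_i^*=d_i^*\}$, $i=1,\ldots,4$, and is convex on each, by the argument that $g'\neq0$ on the interior of each edge of $\bar{\gamma}_1$, exactly as in the proof of Theorem~\ref{fb}(d). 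I expect the only real (though mild) obstacle to be the bookkeeping in the middle paragraph: verifying that the half-turn screw motion genuinely preserves $\Gamma$ once $\pi(\bar{\gamma}_1)$ is centered, and that the flux identity of Theorem~\ref{ps} survives with $\Sigma$ split into two pieces rather than $n$; once these are in hand, all the geometric conclusions are inherited from Theorems~\ref{fb} and~\ref{ps}.
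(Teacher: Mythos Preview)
Your overall strategy—run Theorem~\ref{ps} with a half-turn screw motion—is exactly what the paper intends, and your flux argument is correct: the $180^\circ$ rotation negates the horizontal flux, so $f_{\bar\gamma_0}(\Sigma)=0$ and $\Sigma^*$ is a well-defined annulus.

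There is, however, a genuine gap in the sentence ``these five planes enclose a right rhombic pyramid $P_m$ similar to $P_r$.'' In the regular $n$-gon case (Theorem~\ref{ps}), the $n$ lateral planes of $\Sigma^*$ are all rotations of a \emph{single} plane by multiples of $2\pi/n$, and this forces them to meet at a common apex on the axis. In your rhombic setup the $180^\circ$ rotation only gives $d_1^*=d_3^*$ and $d_2^*=d_4^*$. Since the unique linear relation among the four lateral normals is $\nu_1-\nu_2+\nu_3-\nu_4=0$ (all $\nu_i$ have the same vertical component and opposite horizontal parts in pairs), the system $\langle x,\nu_i\rangle=d_i^*$ has a common solution only if $d_1^*-d_2^*+d_3^*-d_4^*=0$, i.e.\ $d_1^*=d_2^*$. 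Nothing you have written forces this; with only the $C_2$ symmetry the four planes may bound a wedge-shaped polyhedron like $P_o$ in Figure~8 rather than a pyramid.

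The fix is to use the \emph{full} $D_2$ symmetry of the right rhombic pyramid. The curve $\Gamma$ is also invariant under the $180^\circ$ rotations $\rho_a,\rho_b$ about the horizontal line segments joining the midpoint of $\bar\gamma_0$ to the two ``mid-vertices'' of $\bar\gamma_1$ (these encode the reflection symmetries of $P_r$ across its diagonal planes). By uniqueness (Theorem~\ref{plateau}(c)) $\Sigma$ is invariant under $\rho_a,\rho_b$, hence contains those segments; the conjugate surface $\Sigma^*$ is then symmetric across two vertical planes, which interchanges $\{y_1^*=d_1^*\}$ with $\{y_2^*=d_2^*\}$ and forces the four lateral planes to meet at a single apex on the axis. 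Equivalently, one may bypass this issue by the alternative route at the end of Theorem~\ref{ps}: cut $B$ by its diagonals into four congruent right triangles, apply Smyth's theorem in one of the resulting tetrahedra, and extend by reflection.
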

\begin{proof}
Similar to Theorem \ref{ps}
\end{proof}

\begin{remark}
(a) As $n\rightarrow\infty$, $P_y$ of Theorem \ref{ps} becomes a right circular cone and then $\Sigma$ will be part of the helicoid and $\Sigma^*$ a catenoidal waist in $P_y$.

(b) In case the Platonic solid $P_s$ is a cube, the free boundary minimal surface $\Sigma_1$ with genus 0 proved to exist in $P_s$ by Corollary \ref{pl} is the same as Schwarz's $P$-surface $S$. This can be verified as follows. The cube $P_s$ is tessellated into six right pyramids with square base. Let $P_y$ be one of them. Then $(\Sigma_1\cap P_y)^*$ and $(S\cap P_y)^*$ are translationally periodic minimal surfaces. Denote their boundaries by $\Gamma_{\Sigma_1}$ and $\Gamma_S$, respectively. $\Gamma_{\Sigma_1}$ and $\Gamma_S$ are piecewise linear and translationally periodic. Since their fundamental pieces are determined by the outward unit normals to the faces of the same pyramid $P_y$, $\Gamma_{\Sigma_1}$ and $\Gamma_S$ must be identical. Let $\bar{\gamma}_0\cup\bar{\gamma}_1$ be their fundamental piece. Since the projection of $\bar{\gamma}_1$ into the base of $P_y$ is a square which is convex, there is only one periodic Plateau solution spanning $\Gamma_{\Sigma_1}$ by Theorem \ref{plateau} (c). Hence $\Sigma_1^*$ must be the same as $S^*$ and therefore  $\Sigma_1=S$. Similarly, the free boundary minimal surface $\Sigma_1$ in the regular tetrahedron is the same as the one constructed by Nitsche \cite{N2}.

(c) In the cube $P_s$, $\Sigma_4$ is nothing but {\it Neovius' surface} and $\Sigma_5$ is {\it Schoen's I-WP surface} (see figure 9). Only in the cube can one construct an extra free boundary minimal surface $\Sigma_6$ as follows. Let $F$ be a square face of the cube and let $F_2$ be a right isosceles which is a half of $F$. Then the tetrahedron that is the cone from the center of $P_s$ over $F_2$ contains three free boundary minimal disks. If we choose one of the three that connects the two orthogonal edges of $F_2$, then its analytic continuation is the desired $\Sigma_6$. This is {\it Schoen's F-RD surface} which surrounds only four vertices of the cube whereas Schoen's I-WP surface surrounds all eight vertices of the cube (see Figure 9).

\begin{center}
\includegraphics[width=4.7in]{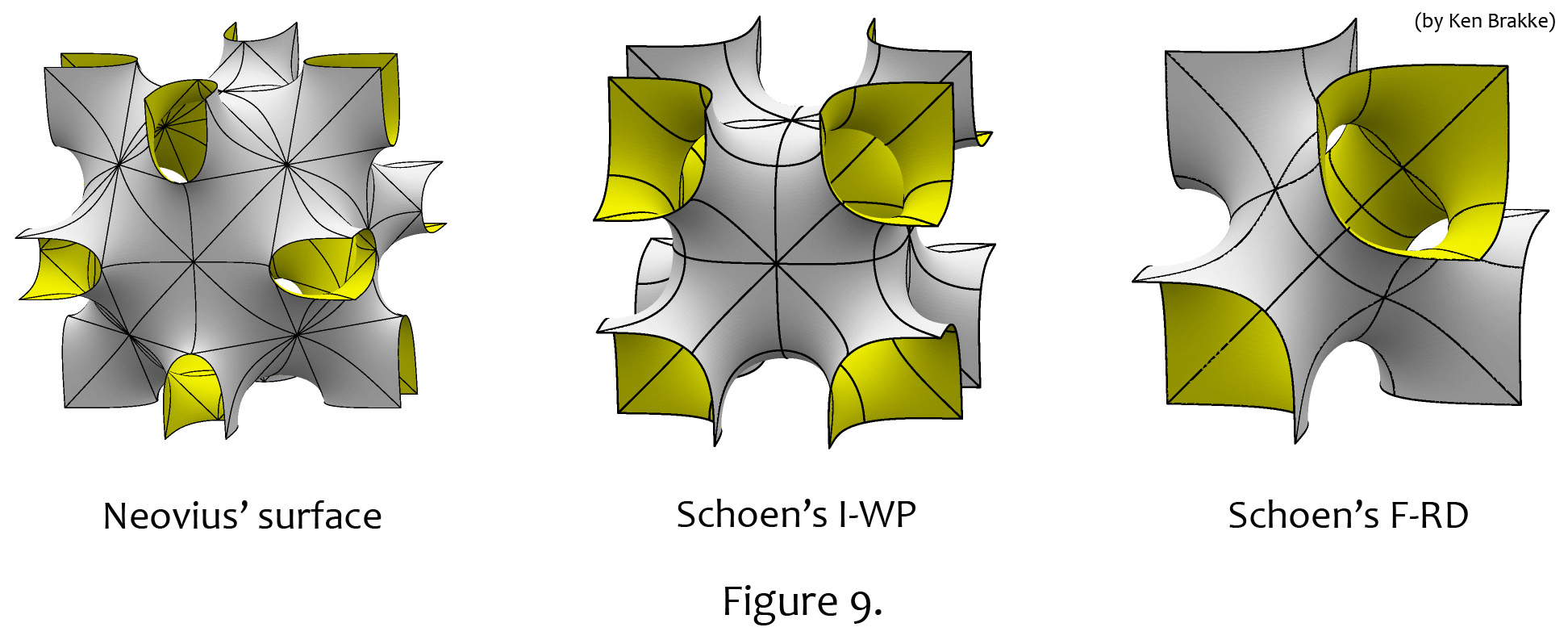}\\
\end{center}
\end{remark}

We would like to conclude our paper by proposing the following interesting problems.\\

\noindent{\bf Problems.}
\begin{enumerate}
\item  What kind of a pyramid $P_y$ with $n$-gon  base ($n\geq4$) has a free boundary minimal annulus?

\item  Let $\Gamma$ be a Jordan curve in $\mathbb R^3$ bounding a minimal disk $\Sigma$. If the total curvature of $\Gamma$ is $\leq4\pi$, we know  that $\Sigma$ is unique \cite{N}. Show that $\Sigma^*$ is the unique minimal disk spanning $\partial\Sigma^*$.

\item  Assume that $\Gamma\subset\mathbb R^3$ is a Jordan curve with total curvature $\leq4\pi$. It is proved that any minimal surface $\Sigma$ spanning $\Gamma$ is embedded \cite{EWW}. If $\Sigma$ is simply connected, show that $\Sigma^*$ is also embedded.

\item Let $\Gamma$ be a complete translationally (or helically) periodic curve with a fundamental piece $\bar{\gamma}$. Assume that a translationally(or helically) periodic minimal surface $\Sigma_\Gamma$ spans $\Gamma$. What is the maximum total curvature of $\bar{\gamma}$ that guarantees the uniqueness of $\Sigma_\Gamma$? What about the embeddedness of $\Sigma_\Gamma$?

\item  Assume that $\Sigma\subset\mathbb R^3$ is a free boundary minimal annulus in a ball. Show that $\Sigma^*$ is a translationally periodic free boundary minimal surface in a cylinder so that $\Sigma$ is necessarily the critical catenoid.
\end{enumerate}

\end{document}